\documentclass{amsart}
\usepackage[utf8]{inputenc}

\usepackage{amsfonts, amssymb, amsmath, amsthm, latexsym}
\usepackage{thmtools} % I dont know why we need thmtools but it solved the problem of cleverref naming in overleaf. Load, in this order: amsthm, thmtools, cleverref. according to https://tex.stackexchange.com/questions/19104/cleveref-with-counters-with-same-name
\usepackage{mathtools}

\usepackage{hyperref}
\hypersetup{
	colorlinks=true,
	linkcolor=blue,
	filecolor=magenta,      
	urlcolor=blue,
	pdfpagemode=FullScreen,
}
\usepackage[capitalise]{cleveref}
\usepackage{comment}
\usepackage{xfp}
\usepackage{svg} 
\usepackage{tikz, tikz-cd}
\usetikzlibrary{positioning,arrows,arrows.meta, decorations.pathmorphing,backgrounds,patterns,decorations.markings,decorations.pathreplacing,decorations.text,shapes.geometric,svg.path}
\tikzset{negated/.style={
        decoration={markings,
            mark= at position 0.5 with {
                \node[transform shape] (tempnode) {$\backslash$};
            }
        },
        postaction={decorate}
    }
}

\usepackage{relsize}
\tikzset{fontscale/.style = {font=\relsize{#1}}	}
\usepackage[shortlabels]{enumitem}
\usepackage{caption, subcaption}
\usepackage{blindtext}
\usepackage[export]{adjustbox}
\graphicspath{ {img/} }

\newtheorem{introtheorem}{Theorem}[]

\newtheorem{theorem}{Theorem}[section]
\newtheorem{lemma}[theorem]{Lemma}
\newtheorem{proposition}[theorem]{Proposition}
\newtheorem{corollary}[theorem]{Corollary}
\theoremstyle{definition}

\newtheorem{definition}[theorem]{Definition}
\newtheorem{remark}[theorem]{Remark}
\newtheorem{notation}[theorem]{Notation}

\newcommand{\mcomment}[1]{}%\marginpar{\begin{center}\scriptsize{#1}\end{center}}} % Comentario al margen
\newcommand{\lam}[1]{\textcolor{olive}{}}%\textbf{LAM:}}}% #1}}
\newcommand{\pll}[1]{\textcolor{purple}{}}%\textbf{PLL:} #1}}

\newcommand{\mycomment}[1]{{\color{olive}}}%#1}}
\newcommand{\paloma}[1]{{\color{purple}}}%#1}}
\usepackage[normalem]{ulem}
\newcommand\so{\bgroup\markoverwith	{\textcolor{olive}{\rule[.5ex]{2pt}{0.4pt}}}\ULon} %strikeout

\newcommand*{\A}{\mathcal{A}}

\newcommand{\Z}{\mathbb{Z}}

\newcommand{\slg}{\textnormal{SL}_G}
\newcommand{\fftp}{\textnormal{FFTP}}
\newcommand{\slfftp}{\textnormal{SL-FFTP}}
\newcommand{\sll}{<_{\textnormal{SL}}}

\newcommand{\csl}{\textnormal{C}_{\textnormal{SL}}}
\newcommand{\slrep}{\textnormal{SL}}
\newcommand{\lex}{<_{\textnormal{lex}}}

\title{Groups with finitely many shortlex cones}
\author[L. Asencio-Mart\'{i}n]{Luc\'{i}a Asencio-Mart\'in}
\address{School of Mathematics, Statistics and Physics, Newcastle University,
Newcastle upon Tyne
NE1 7RU, United Kingdom}
\email{L.Asencio-Martin2@newcastle.ac.uk}

\author[P. López-Larios]{Paloma López-Larios}
\address{Facultad de Ciencias Matemáticas, Universidad Complutense de Madrid,
Madrid
28040, España}
\email{pallop05@ucm.es}
\begin{document}
\subjclass[2020]{20F65, 20F10, 68Q80}
\begin{abstract}
    We show that a finitely generated group $G=\langle\Sigma\rangle$ satisfying the falsification by fellow traveller property also satisfies the shortlex falsification by fellow traveller property. 
    This implies that there are finitely many shortlex cone types and, therefore, that the corresponding language of shortlex representatives $\slg\subset (\Sigma\cup\Sigma^{-1})^*$ is a regular language. Following the example of M. Elder, we prove that the converse of the previous statements do not hold.
\end{abstract}
\maketitle
\section{Introduction}

 The \emph{falsification by fellow traveller property} (FFTP) was introduced in \cite{Neumann_Shapiro_1995} by Neumann and Shapiro, who were inspired by the ideas of Cannon in \cite{cannon84}. Informally, a graph has the FFTP if every non-geodesic path in the graph admits a shorter path, with the same endpoints, that remains close to the first one. We say that a group $G$ with finite generating set $\Sigma$ has the FFTP with respect to $\Sigma$ if the Cayley graph Cay(G, $\Sigma$) has it. 

 From the work of Neumann and Shapiro, one can see that if a group G has the FFTP with respect to some generating set $\Sigma$, then the language of geodesics over this generating set is regular. In addition to the results in the grounding works of Cannon, Neumann and Shapiro, the falsification by fellow traveller property has been the topic of many research articles. Indeed, it has been shown that groups with FFTP are of type $F_3$ and their Dehn functions are at most quadratic  \cite{Elder2002}, that they have rational growth, and that having a regular set of geodesics does not imply having FFTP \cite{elder05}. Moreover, there are many works providing new examples of families of groups that satisfy FFTP: Dyer groups \cite{howarth26}, Coxeter groups \cite{noskov01}, dihedral Artin groups \cite{cc25}, Garside groups \cite{holt10}, Artin groups of large type \cite{hr12} and relatively hyperbolic groups \cite{ac16}.

For a fixed total order in the letters of an alphabet $A$, it is possible to define a well-ordering on the set of words over the alphabet $A$, $A^*$, which is called the \emph{shortlex ordering} of $A^*$ (see \cref{def:shortlex_ordering}). If $G$ is a group with generating set $\Sigma$, we can consider the shortlex ordering on $(\Sigma \cup \Sigma^{-1})^*$ and take the unique minimal representative, called shorlex representative, of each element of the group with respect to this ordering.
We say that a group $G$ together with a finite generating set $\Sigma$ satisfies the \emph{shortlex falsification by fellow traveller property} (SL-FFTP) if there exists a global constant $K$ such that, for every word that is not a  shortlex representative, we can find another representative of the same group element which is smaller in the shortlex ordering and $K$-fellow travels with it. 

The main goal of this paper is to prove the following theorem:
\begin{introtheorem}[\cref{prop: fftp implies regular language}]
    Let $G$ be a group with finite generating set $\Sigma$ satisfying $\slfftp$, then the set of shortlex representatives $\slg$ is a regular language.
\end{introtheorem}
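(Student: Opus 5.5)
The plan is to build a finite automaton that recognises the complement of $\slg$ in $A^*$, where $A=\Sigma\cup\Sigma^{-1}$. Regular languages are closed under complement, so this gives regularity of $\slg$.

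Let $K$ be the $\slfftp$ constant and let $B$ be the ball of radius $K$ about the identity in $G$; it is finite. Consider a word $w$ that is not a shortlex representative. By $\slfftp$ there is a word $v$ with $v=_G w$, $v\sll w$, and $v$ $K$-fellow travelling $w$. We have $|v|\le |w|$.

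\begin{itemize}
\item If $|v|<|w|$, the two words have different lengths. We pad $v$ at the end with a padding symbol $\$$.
\item If $|v|=|w|$, then $v\lex w$. Reading $w$ and $v$ synchronously, they agree up to some position and then $v$ has the smaller letter.
\end{itemize}

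Next we build a nondeterministic automaton $M$ over the padded alphabet $(A\cup\{\$\})^2$ that reads pairs $(w(i),v(i))$. Its states are triples $(g,c,\epsilon)$, where:
\begin{itemize}
\item $g\in B$ records the current difference $w(i)^{-1}v(i)$;
\item $c$ is a comparison flag taking the values equal, smaller or larger, recording the lexicographic status of $v$ against $w$ so far;
\item $\epsilon$ records whether $v$ has already ended.
\end{itemize}
Reading a pair $(a,b)$ updates $g\mapsto a^{-1}gb$, and the run dies if the new value leaves $B$. The flag $c$ is set at the first position where $a\neq b$. Once $v$ is padded, only pairs $(a,\$)$ are allowed. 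A state is accepting when $g=1$ and one of the following holds:
\begin{itemize}
\item $v$ is strictly shorter than $w$; or
\item the two words have the same length and $c$ is ``smaller''.
\end{itemize}
This is the standard construction of word-difference automata from automatic groups. Since $B$ is finite, $M$ is a finite automaton.

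Projecting $M$ onto the first coordinate gives an NFA $N$ over $A$. By construction, $w$ is accepted by $N$ exactly when there is some $v$ with $v=_G w$, $v\sll w$, and $v$ $K$-fellow travelling $w$.

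We now check that $L(N)$ is exactly the complement of $\slg$.
\begin{itemize}
\item If $w\notin\slg$, then $\slfftp$ supplies such a $v$, so $w\in L(N)$.
\item Conversely, if $w\in L(N)$, there is a word $v\sll w$ representing the same element. Then $w$ is not the shortlex-minimal representative, so $w\notin\slg$.
\end{itemize}
Hence $\slg=A^*\setminus L(N)$. This is regular, since NFAs can be determinised and DFAs complemented.

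The main obstacle is keeping track of fellow travelling when $v$ is shorter than $w$. After $v$ ends, the definition of fellow travelling compares $w(i)$ with the endpoint $v$ itself. Padding with $\$$ and using the update $g\mapsto a^{-1}g$ on pairs $(a,\$)$ handles this correctly.

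One must also check that the fellow-traveller condition in the definition of $\slfftp$ is synchronous, so that it bounds $d(w(i),v(i))$ for each $i$. If the paper's definition is asynchronous, the step would need the usual reparametrisation argument, or a larger constant, to obtain a synchronous bound.
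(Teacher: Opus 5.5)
You take a genuinely different route from the paper: a falsification (word-difference) automaton, then projection and complementation, instead of counting $\slrep$-cones. The automaton itself is set up correctly. However, it only detects \emph{synchronous} fellow travellers, while \cref{def: SLFFTP} gives only \emph{asynchronous} $K$-fellow travelling. So the step ``$w\notin\slg\Rightarrow w\in L(N)$'' is not justified, and this is a genuine gap. Reparametrising or enlarging $K$ does not close it, because that trick works only for (quasi-)geodesic paths, and $w$ is arbitrary. For example, in $\Z=\langle a\rangle$ take $w=(aa^{-1})^na^n$ and $v=a^n(aa^{-1})^{n-1}$. Then $v=_Gw$, $v\sll w$, and the two words asynchronously $1$-fellow travel, yet their vertices at time $n$ are at distance at least $n-1$. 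Nothing in the definition prevents the witness $v$ from behaving like this.

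The missing idea is to reduce to almost-geodesic words.
\begin{enumerate}
\item Since $u'\sll u$ implies $u'z\sll uz$, the language $\slg$ is prefix-closed. So any $w\notin\slg$ factors as $w=w_0w_2$ with $w_0=w_1x\notin\slg$, $w_1\in\slg$ and $x$ a letter.
\item Because $w_1$ is geodesic, $|w_0|_G\ge|w_0|-2$. Hence the witness $v$ for $w_0$ satisfies $|w_0|-2\le|v|\le|w_0|$.
\item Every prefix $p$ of $w_0$ or of $v$ then satisfies $|p|-2\le|p|_G\le|p|$. For $v$, this holds because otherwise replacing a prefix by a geodesic would give a word representing $w_0$ of length less than $|w_0|_G$.
\item Comparing distances to $1_G$, if $d(w_0(i),v(j))\le K$ then $|i-j|\le K+2$. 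Hence $d(w_0(i),v(i))\le 2K+2$, and positions beyond $|v|$ add at most $|w_0|-|v|\le 2$.
\item Then $vw_2=_Gw$, $vw_2\sll w$, and $vw_2$ synchronously $(2K+2)$-fellow travels $w$.
\end{enumerate}
So asynchronous $\slfftp$ implies synchronous $\slfftp$, and with this lemma your automaton does recognise the complement of $\slg$.

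The paper avoids the issue altogether. \cref{lem: fftp implies finite number of sl cones} shows that $\csl(g)$ is determined by the finite data $\mathcal C_g=(\Delta_g,L_g,P_g)$ on $B_K(1_G)$, and from $v$ it essentially uses only a single vertex $g'h$ within $K$ of $g'$. \cref{lemma: construction automaton} then gives regularity. Your route replaces that case analysis with a mechanical construction from the ball, but it needs the synchronisation lemma above to go through.
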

The strategy to prove this result is to mimic the classical argument by Neumann and Shapiro for showing that having FFTP implies that the set of geodesics is regular. Essentially, we will define a shortlex analogue of the cone of an element (see \cref{def: slcone}) and conclude that, under $\slfftp$, there are finitely many of them, which allows us to build a finite state automaton recognising the language of minimal shortlex representatives of elements of the group.

In the classical case of $\fftp$, one can use the fact that the language of geodesics is regular to prove that the group has rational growth \cite{Neumann_Shapiro_1995}; however, the Heisenberg group gives a counterexample to the converse statement: this group has rational growth \cite{duchinshapiro19} but does not have a regular set of geodesics \cite{warshall10}. Analogously, it is true that a regular set of shortlex representatives implies that the group has rational growth (\cref{coro:slfftp_rational_growth}) and the Heisenberg group, following the arguments in \cite{warshall10}, can be seen to have rational growth but not a regular set of shortlex representatives with respect to the standard generators.

We have already mentioned that groups with $\fftp$ have a regular set of geodesics. In \cite{elder05}, the author shows that the converse is not true: the group  $G
$ with presentation $\langle a, t \;|\; t^2=1, atat =tata\rangle$ does not have $\fftp$ but the language of geodesics is regular. We characterise for which total orders on $\{a,a^{-1}, t\}$ the group has $\slfftp$ and we show:

\begin{introtheorem}[\cref{prop:slg_regular_not_slfftp,prop:ordering_non_slfftp,prop:some_orderings_slfftp}]
    There exists a group $G$ and a generating set $\Sigma$ such that $(G,\Sigma)$ does not have $\fftp$, it has $\slfftp$ for some total orderings on $\Sigma \cup \Sigma^{-1}$ but not for all of them, and the language of shortlex representatives $\slg$ is a regular language for any total ordering. 
\end{introtheorem}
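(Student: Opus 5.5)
The statement combines three separate claims about one explicit example, so I would prove it in three parts. I take Elder's group $G=\langle a,t \mid t^2=1,\ atat=tata\rangle$ with $\Sigma=\{a,t\}$, so that $\Sigma\cup\Sigma^{-1}=\{a,a^{-1},t\}$ and there are exactly six total orders to consider. The failure of $\fftp$ is Elder's result \cite{elder05}, which I simply quote.

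\emph{Normal forms.} The first real step is a concrete model of $G$. The relation says that $t$ commutes with $z=(at)^2$. I would show that $G$ splits as an extension of $\langle z\rangle\cong\Z$ (or a suitable free abelian subgroup containing it) by an infinite dihedral-type quotient. In practice I would reuse Elder's explicit description of the geodesics: they are words $a^{n_0}ta^{n_1}t\cdots$ with sign and length constraints on the exponents. From this description I would read off, for each of the six orders, the shortlex representative of each element. The idea is that among the geodesics of an element, the order only decides whether the $t$'s are pushed as far left or as far right as the relation $atat=tata$ (and its inverted forms) allows, and which sign of $a$-exponent is preferred when there is a choice.

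\emph{Regularity for every order} (\cref{prop:slg_regular_not_slfftp}). I would write $\slg$ as an explicit regular expression in each case. The cleanest route is to note that $\slg$ equals the geodesic language, which is regular by Elder, minus the words containing a forbidden factor. These factors are the finitely many patterns, such as $tata\to atat$ when $a<t$, or their long analogues $a^{k}t a^{-k}\cdots$, that can be replaced by a lexicographically smaller word representing the same element. If some patterns have unbounded length, I would instead express the condition ``lexicographically minimal among geodesics'' as a condition on the sequence of exponents that a finite automaton can check with a bounded counter. This works because the geodesic language already fixes the exponents up to finitely many local choices.

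\emph{$\slfftp$ for some orders but not others} (\cref{prop:ordering_non_slfftp,prop:some_orderings_slfftp}). For the good orders, I would check that every non-representative word $w$ is either non-geodesic, or contains a bounded-length factor whose lexicographic improvement is a local rewrite.
\begin{itemize}
\item In the local-rewrite case, the improved word fellow travels $w$ with a uniform constant.
\item In the non-geodesic case, the obstruction to $\fftp$ matters. I would show that for these orders the problematic non-geodesic words (Elder's family where shortening requires a distant change) always contain an earlier local lexicographic improvement, or can be shortened by a fellow-travelling smaller word.
\end{itemize}
For the bad orders, I would exhibit a family $w_n$ of geodesic non-representatives, presumably built from Elder's family, such as words of the shape $a^{n}ta^{-n}t\cdots$. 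In these words the only way to reach a smaller representative forces a change at the first letter that is justified only by letters at distance about $n$. Any smaller word with the same endpoints must then deviate from $w_n$ by an amount growing with $n$, so no uniform $K$ exists.

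\emph{Main obstacle.} I expect the hardest step to be the positive $\slfftp$ direction. It requires a complete case analysis of non-geodesic words, precisely where $\fftp$ fails, and it has to show that the shortlex freedom rescues fellow-travelling for the right orders. I would attack it using the cone-type criterion: I would compute the shortlex cone types explicitly from the normal forms and verify that a bounded-radius neighbourhood determines them.
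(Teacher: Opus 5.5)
There is a genuine gap in the negative direction: you look for the wrong kind of witness. You propose a family of \emph{geodesic} non-representatives. In this group no such family can violate $\slfftp$, for any ordering.

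By Elder's description, the only geodesics that can fail to be shortlex representatives are the words $a^{x_1}ta^{y}ta^{x_2}$ with $x_1x_2\ge 0$ and $y\neq 0$. The geodesics of that element form the chain $a^{k}ta^{y}ta^{n-k}$, where $n=x_1+x_2$ and $k$, $n-k$ have the sign of $n$. Two consecutive words of the chain differ by moving one letter $a^{\pm1}$ across the block $ta^{y}t$, so they $3$-fellow travel. They first differ at a position where one has $t$ and the other has $a^{\pm1}$, so the lexicographic order is monotone along the chain. Hence every geodesic non-representative has a smaller $3$-fellow traveller. For instance, $a^{n}ta^{-n}t\in\slg$ when $a<t<a^{-1}$, and for $a^{-1}<t<a$ the word $a^{n-1}ta^{-n}ta$ is a smaller $3$-fellow traveller of it.

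So the witness must be non-geodesic. The paper takes $w=ta^{y}ta^{x}t$ with $y>0>x$ and the order $a<t<a^{-1}$.
\begin{itemize}
\item The only shorter equivalent word is the unique geodesic $a^{x}ta^{y}$, which stays far from $w$. This is exactly Elder's failure of $\fftp$.
\item A same-length equivalent three-$t$ word $a^{x_1}ta^{y_1}ta^{x_2}ta^{y_2}$ must have $x_1,x_2\le 0\le y_1,y_2$. Starting with $a^{-1}$ makes it larger, since $t<a^{-1}$. Having $y_1<y$ puts a $t$ where $w$ has an $a$, which makes it larger, since $a<t$. So it equals $w$.
\item You must also discard the padded one-$t$ word $aa^{-|x|-1}ta^{y}$. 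It has length $|w|$ and is smaller, but it also stays far from the vertex $ta^{y}$.
\end{itemize}
What blocks $\slfftp$ is therefore that the ordering rules out every same-length improvement, leaving only a non-local shortening.

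Relatedly, you never say which orders are good and which are bad, and the statement needs one of each. The dichotomy is whether $t$ lies between $a$ and $a^{-1}$.
\begin{itemize}
\item When $t$ is the largest letter, the local lexicographic improvement you anticipate is the rewrite of a factor $ta^{y}ta^{x}t$ (with $x,y\neq 0$) into $a^{\pm1}ta^{y}ta^{x\mp1}t$, the sign being that of $x$. It has the same length, a smaller first letter, represents the same element by the exponent lemma, and $3$-fellow travels.
\item When $t$ is the smallest letter, one uses $ta^{y\mp1}ta^{x}ta^{\pm1}$ instead, the sign being that of $y$.
\end{itemize}
Together with the chain argument for geodesics and the local shortenings of $tt$, $aa^{-1}$ and $a^{x_1}ta^{y}ta^{x_2}$ with $x_1x_2<0$, this gives $\slfftp$ by a direct word-level check.

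Your fallback of computing cone types and checking that bounded-radius data determines them is not a valid route. No implication from cone types back to $\slfftp$ is available: the paper proves only that $\slfftp$ implies finitely many cones, and this very example has finitely many $\slrep$-cones for the bad orders.

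The regularity part of your plan is fine and matches the paper. The paper simply lists the shortlex representative of $a^{x_1}ta^{y}ta^{x_2}$ for each order.
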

The following diagram sums up the relationship and the parallellisms between the classical falsification by fellow traveller property and the shortlex version that we introduce:

\begin{centering}\[
\begin{tikzcd}[arrows=Rightarrow, column sep=0.8cm, row sep=0.1cm, every arrow/.append style={shift left=0.8ex}]
    G \text{ has FFTP }\hspace{0.3cm}
    \arrow[]{dd} 
    \arrow{r}
    &  \text{regular geodesics}
    \arrow[negated]{l} 
    \arrow[]{rd} & 
    \\
    & & \hspace{0.5cm}G\text{ has rational growth}
    \arrow[negated]{lu}
    \arrow[negated]{ld}
    \\    
    G \text{ has SL-FFTP}
    \arrow[negated]{uu} 
    \arrow{r}
    &  \hspace{0.4cm}\text{regular } \slg\hspace{0.4cm}
    \arrow[negated]{l} 
    \arrow[]{ru} &   
\end{tikzcd}\]
\end{centering}

The rest of this document is structured as follows: \cref{sec:defs} provides preeliminary definitions regarding regularity of languages and the shortlex version of the cone of an element. \cref{sec:slfftp} contains the definitions of the falsification by fellow traveller property and its shortlex version, as well as the proof that a group with SL-FFTP has a regular set of shortlex representatives and, as a consequence, it has rational growth; we discuss why the Heisenberg group provides a counterexample to the converse. \cref{sec:fftp_vs_slfftp} contains further results on the relation between shortlex FFTP and the regularity of shortlex representatives.

\section{Shortlex Cones}\label{sec:defs}
\begin{definition}[Finite state automaton]\label{def:automaton}
A  \emph{finite state automaton} (FSA)  is a tuple $\A= (S, V, E, V_F, v_0)$ 
 where $S$ is a finite set of letters (the \emph{alphabet}),  $V$ is the set of vertices or 
\emph{states}, 
$E$ is the set of $S$-labelled edges or \emph{transitions} of the form $(v, s, w)\in V\times \Sigma\times V$, 
$v_0\in V$ is the \emph{initial state} and $V_F\subset V$ is the set of 
\emph{accepting states} of the automaton.
\end{definition}
We say that $(v, s, w)\in E$ is 
an \emph{$s$-labelled} transition.
\begin{definition}[Language accepted by FSA, regular language]
To each finite state automaton $\A$ we can assign the set $L(\A)\subset S^*$ 
of words \emph{accepted} by the automaton, which are the words given by the concatenation of edge labels of paths in $\A$ that start in $v_0$ and end in one of the 
accepting states;  we call $L(\A)$ the \emph{language} accepted by $\A$.
 We say that a  subset $L$ of $S^*$ is \emph{regular} if it is the language $L(\A_L)$  accepted by a finite state automaton $\A_L$.
\end{definition}
For further definitions related to finite state automata and regular languages we refer the reader to \cite[\S1.1 and \S1.2]{epstein92}.

Next, we introduce the shortlex ordering for words over an alphabet with respect to a given total ordering of the alphabet.
\begin{definition}[Shortlex ordering]\label{def:shortlex_ordering}
    Let $(S, <)$ be a totally ordered alphabet. We will consider two different order relations in $S^*$:
    \begin{itemize}
        \item The \emph{lexicographical order}: $w_1,w_2\in S^*$, $w_1\lex w_2$ if and only if $w_1$ is a strict prefix of $w_2$ or the letter in the first position in which $w_1$ and $w_2$ differ is smaller for $w_1$.
        \item The \emph{shortlex order}: for $w_1,w_2\in S^*$, $w_1\sll w_2$ if and only if $w_1$ is shorter than $w_2$ or they have the same length and $w_1\lex w_2$.
    \end{itemize}  
\end{definition}

\begin{notation}\label{notation:sl}
    The shortlex order is a well-ordering. In particular, if $G$ is a group generated by $\Sigma$ and we consider the shortlex order on $(\Sigma\cup \Sigma^{-1})^*$ with respect to some ordering of $\Sigma\cup \Sigma^{-1}$, there is a well-defined minimal representative for each element $g$ of the group $G$ under this order, which will be denoted $\slrep_{(\Sigma\cup\Sigma^{-1}, <)}(g)$ or simply $\slrep(g)$ if $(\Sigma\cup\Sigma^{-1}, <)$ is understood by the context. We will denote 
    by $\slrep_{(G, \Sigma\cup\Sigma^{-1}, <)}$ the language of these minimal representatives of elements of $G$, and write $\slg$ for short if $(G, \Sigma\cup\Sigma^{-1}, <)$ is understood.
\end{notation}

\begin{definition} \label{def: slcone}
    Let $G$ be a group with finite generating set $\Sigma$ and let $g$ be an element of $G$. The $\slrep$-cone of $g$ is defined as \[\csl(g):=\{w\in (\Sigma\cup \Sigma^{-1})^*: \slrep(g)w\in \slg \}.\]
\end{definition}

\begin{lemma}  \label{lemma: construction automaton}
    A group $G$ with finite generating set $\Sigma$  has finitely many $\slrep$-cones if and only if
     $\slg$ is a regular language.
\end{lemma}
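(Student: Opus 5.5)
The plan is to relate $\slrep$-cones to left quotients of the language $\slg$ and then invoke the Myhill--Nerode theorem, which says that a language $L\subset S^*$ is regular if and only if it has finitely many left quotients $u^{-1}L=\{w: uw\in L\}$. Throughout, write $S=\Sigma\cup\Sigma^{-1}$.

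First I record the key observation. If $u\in\slg$ represents $g$, then $u=\slrep(g)$, so by definition $u^{-1}\slg=\csl(g)$. If instead $u\notin\slg$, then $u^{-1}\slg=\emptyset$. To see this, suppose $uw\in\slg$ for some $w$. Any prefix of a shortlex-minimal word is itself shortlex-minimal: if $u'\sll u$ represents the same element as $u$, then $u'w\sll uw$. This holds because, if $u'$ is shorter, then $u'w$ is shorter than $uw$; and if they have the same length, the first difference between $u'w$ and $uw$ occurs in the same position as the first difference between $u'$ and $u$. Also $u'w$ represents the same element as $uw$, which contradicts the minimality of $uw$. Hence the set of left quotients of $\slg$ is exactly $\{\csl(g):g\in G\}$, possibly together with $\emptyset$. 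Every element $g$ arises here, by taking $u=\slrep(g)$.

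The two directions now follow. If there are finitely many $\slrep$-cones, then $\slg$ has finitely many left quotients and is therefore regular by Myhill--Nerode. Conversely, if $\slg$ is regular, it has finitely many left quotients, and each cone is one of them, so there are finitely many cones.

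To avoid relying on Myhill--Nerode as a black box, I would also build the automaton explicitly for the forward direction. Take as states the finitely many cones together with a fail state. The initial state is $\csl(1_G)=\slg$, and the accepting states are the cones, since $\varepsilon\in\csl(g)$ always holds. For a cone $C=\csl(g)$ and a letter $s$, the transition goes to $\csl(gs)$ if $s\in C$ and to fail otherwise. This is well defined on cones. Indeed, if $s\in\csl(g)$, the prefix-closure fact gives $\slrep(g)s=\slrep(gs)$, and then $\csl(gs)=s^{-1}\csl(g)$, which depends only on $C$. An induction on word length then shows that reading $u$ ends in $\csl(\bar u)$ precisely when $u\in\slg$.

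The only delicate point is the prefix-closure and cancellation property of the shortlex order, namely that $u'\sll u$ implies $u'w\sll uw$. This needs checking in both the shorter-length case and the equal-length case, as done above. Everything else is bookkeeping.
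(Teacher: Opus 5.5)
Your proof is correct and is essentially the paper's argument. The paper proves both directions by hand: the state reached by reading $\slrep(g)$ in an automaton for $\slg$ determines $\csl(g)$, and conversely the cones serve as states with transitions $\csl(g)\to\csl(gs)$ for $s\in\csl(g)$. These are exactly the two halves of Myhill--Nerode under your identification of cones with left quotients of $\slg$.

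Your prefix-closure check, giving $\slrep(g)s=\slrep(gs)$ and $\csl(gs)=s^{-1}\csl(g)$, supplies the well-definedness and correctness of that automaton, which the paper leaves implicit. Phrasing the argument via left quotients also avoids the paper's tacit assumption that the given automaton is deterministic.
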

\begin{proof}
    If $\slg$ is a regular language, let $\A= (\Sigma\cup\Sigma^{-1}, V, E, V_F, v_0)$ by a finite state automaton with $L(\A)=\slg$. We claim that there is an injective map from the set of $\slrep$-cones of $G$ to the finitely many states of $\A$. To see this, let $g$ be an element in $G$ and let $w=\slrep(g)$. The word $w$ is the label of an accepted path in $\A$ from its start state $v_0$ to an accepting state $v_g\in V_F$: the injection is defined by $\csl(g)\mapsto v_g$. Indeed, let $g$, $h$ be two elements of $G$ with $v_h=v_g$. By definition of the automaton $\A$, $\csl(g)$ and $\csl(h)$ coincide because they are equal to the set of labels of paths from $v_g=v_h$ to any accepting state of $\A$.
    
    For the converse, a finite automaton recognizing $\slg$ can be built as follows. Take the finite set of $\slrep$-cones of $G$ to be the states of the automaton, all states are final and there is a unique initial state: the $\slrep$-cone of $1_G$. For each $s\in \Sigma\cup \Sigma^{-1}$ and $g\in G$, there is a transition from $\csl(g)$ to $\csl(gs)$ if and only if $s\in \csl(g)$.
\end{proof}

\section{FFTP and SL-FFTP}\label{sec:slfftp}

Given a graph $X$, a  \emph{(combinatorial) path} $p$ is a sequence $v_0e_1v_1\ldots e_nv_n$ where $v_0,\ldots,v_n$ are vertices in $X$ and, for $i = 1,\ldots, n$, $e_i$ is an edge between $v_i$ and $v_{i+1}$. The \emph{length} of $p$ is the number $n$, and the vertices $v_0$, $v_n$ are called the \emph{endpoints} of the path. The path $p$ is \emph{geodesic} if its length is minimal among the lengths of all the paths in $X$ with endpoints $v_0$,$v_n$.

\begin{definition}[Fellow traveller for graphs]\label{def:fellow_traveller_graphs}
   For $K\geq 0$, we say that two paths $p = v_1\ldots v_n$, $q=u_1\ldots u_m$ in a graph $X$ \emph{asynchronously $K$-fellow travel} if there are non-decreasing maps $f\colon [0, n]\to[0, m]$, $g\colon [0, m]\to[0, n]$ satisfying that the vertices $v_i, u_{f(i)}$, as well as the vertices $u_j, v_{g(j)}$ are at distance at most $K$ in the graph for all $i\in[0, n]$ and $j\in[0, m]$ respectively.
\end{definition}
\begin{definition}[FFTP for graphs]\label{def:fftp_graphs}
    A graph $X$ has the \emph{ falsification by fellow traveller property} if there is a constant $K$ such that for every non-geodesic path $p$ there exists a path $q$ with the same endpoints as $p$, that is shorter than $p$ and that asynchronously $K$-fellow travels with it.
\end{definition}
These concepts are translated to a group using its Cayley graph with respect to a given set of generators. For any group $G$ with finite generating set $\Sigma$, its elements are represented by words $w\in(\Sigma\cup \Sigma^{-1})^*$. Each such word $w$ corresponds to a combinatorial path in the Cayley graph of $G$ with respect to $\Sigma$ starting at a fixed basepoint, take the basepoint to be the vertex corresponding to the identity $1_G$. 

The length of such a word $w$, which coincides with the length of the corresponding path, will be denoted by $|w|$. The length of the shortest $v\in (\Sigma\cup \Sigma^{-1})^*$ such that $v=_Gw$ is denoted by $|w|_G$. We say $w$ is geodesic if its corresponding path is geodesic or, equivalently, if $|w|=|w|_G$. 

\begin{definition}[FFTP for groups] \label{def: FFTP}
	A group $G$ with generating set $\Sigma$ is said to have the \emph{falsification by fellow traveller property} (FFTP) \emph{with respect to $\Sigma$} if the Cayley graph of $G$ with respect to $\Sigma$ has the asynchronous falsification by fellow traveller property.
\end{definition}

In this paper we propose a variation of $\fftp$ which, as far as we know,  is not explicit in the literature.
\begin{definition}[SL-FFTP] \label{def: SLFFTP}
	Let $G$ be a group with generating set $\Sigma$, and fix $<$ a total ordering on $\Sigma\cup\Sigma^{-1}$. The group $G$ is said to have the \emph{shortlex falsification by fellow traveller property} ($\slfftp$) \emph{with respect to $\Sigma$} if there exists a constant $K\geq 0$ such that, for any $w\not \in \slg$, there exists a word $v \in (\Sigma\cup \Sigma^{-1})^*$ that asynchronous $K$-fellow-travels with $w$ and satisfies that $w=_Gv$ and  $v\sll w$.
\end{definition}

From now on, $G$ will always be a group with finite generating set $\Sigma$. We fix a total order in $\Sigma\cup \Sigma^{-1}$. Every time we use the shortlex ordering in $(\Sigma\cup \Sigma^{-1})^*$, we will assume that we are referring to the shortlex order induced by this fixed total order.

We first observe the following implication.

\begin{lemma} \label{lem: fftp implies slfftp}
    If $G$ satisfies $\fftp$ with respect to $\Sigma$, then $G$ satisfies $\slfftp$ with respect to $\Sigma$.
\end{lemma}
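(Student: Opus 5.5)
Let $K$ be the $\fftp$ constant for $\mathrm{Cay}(G,\Sigma)$. We will show that $\slfftp$ holds with the constant $K' = \max\{K,\ell\}$, where $\ell$ is a constant controlling how far apart two geodesics with common endpoints can be in the relevant situation. Take $w\notin\slg$ and split into two cases according to whether $w$ is geodesic.

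\emph{Case 1: $w$ is not geodesic.} Here $\fftp$ gives a word $v$ with $v=_G w$, $|v|<|w|$, and $v$ asynchronously $K$-fellow travelling with $w$. Since $|v|<|w|$, we have $v\sll w$ by definition of the shortlex order, and $K\le K'$, so this case is immediate.

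\emph{Case 2: $w$ is geodesic but $w\neq \slrep(w)$.} Put $u=\slrep(w)$. Then $|u|=|w|$ and $u\lex w$. The obvious candidate is $v=u$, but two geodesics with the same endpoints need not fellow travel uniformly in a general $\fftp$ group. This is the main obstacle, and I plan to localise the problem. Write $w = x a y$ and $u = x b y'$, where $x$ is the longest common prefix and $b<a$ are letters. The word $b y'$ is a geodesic for the element $a y$, and it is shortlex smaller. I want to replace only a bounded portion of $w$ after $x$.

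To do this, consider the word $b^{-1} a y$. Since $b\neq a$ and $|ay|_G = |by'|$, we get $|b^{-1}ay|_G = |y'| = |ay|-1$. So the word $b^{-1}ay$, of length $|ay|+1$, is non-geodesic by two. Applying $\fftp$ repeatedly (twice, since each application shortens the word by at least one) gives a word $z$ with $z=_G ay$ and $z$ equal in $G$ to $b\,z'$, where $z'$ is a geodesic representative of $b^{-1}ay$. Each application fellow travels the previous word, so $z'$ asynchronously $2K$-fellow travels $b^{-1}ay$ (up to an additive constant).

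Now set $v = x\,b\,z'$. Then $v =_G w$ and $|v| = |x|+1+|y'| = |w|$. Since $v$ agrees with $w$ on the prefix $x$ and then has $b<a$, we get $v\lex w$, and hence $v\sll w$. Fellow travelling holds because $x$ is shared, and $b z'$ stays within roughly $2K+2$ of $ay$: the path $z'$ fellow travels $b^{-1}ay$, and prepending $b$ shifts everything by one edge.

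The step I expect to need the most care is composing the asynchronous reparametrisations through the two $\fftp$ applications. This gives a constant like $2K+2$, and in general $K'$ is linear in $K$. I would take $K' = 2K+2$, which also covers Case 1. The issue requiring care is that the $\fftp$ output need not be geodesic after one step, so in full generality I iterate until the result is geodesic. Since the length drops by exactly the defect of two, two steps suffice, which keeps the constant uniform.
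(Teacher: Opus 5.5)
Your proof is correct and follows essentially the same route as the paper. You handle non-geodesic $w$ directly by \fftp; for geodesic $w$ you cut at the first letter where $w$ and a lex-smaller geodesic differ, apply \fftp{} at most twice to the defect-two word $b^{-1}ay$ to reach a geodesic $z'$, and prepend $xb$. Your explicit iteration and composition of the fellow-travelling maps, giving a constant of order $2K+1$ or $2K+2$, spells out what the paper compresses into ``by \fftp, we may assume that $|v|=|u_2|$''.
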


\begin{proof}
Let $K$ be the $\fftp$ constant of $G$ with respect to $\Sigma$. Let $w\not \in \slg$. If $w$ is not geodesic, there exists a shorter word $v\in (\Sigma\cup \Sigma^{-1})^*$ that $K$-fellow travels with $w$ and represents the same element of $G$, since $G$ has  $\fftp$. In particular, $v\sll w$, so $\slfftp$ is satisfied in this case. Suppose now that $w$ is geodesic, since $w\not \in \slg$, then there exists $u\in (\Sigma\cup \Sigma^{-1})^*$ with $|u|=|w|$ admitting a factorization as the following: $u=u_1xu_2$ and  $w=w_1yw_2$, with $u_i,w_i\in (\Sigma\cup \Sigma^{-1})^*$, $x,y\in \Sigma\cup \Sigma^{-1}$, $u_1=w_1$ and $x<y$. The word $x^{-1}yw_2$ is not geodesic (since it represents the same element as $u_2$ and it has length $|u_2|+2$) so, by $\fftp$, there exists a word $v$ that $K$-fellow travels with $x^{-1}yw_2$, represents the same element of $G$ and satisfies $|u_2|\leq |v|<|x^{-1}yw_2|=|u_2|+2$. In particular, $|v|=|u_2|$ or $|v|=|u_2|+1$. By $\fftp$, we may assume that $|v|=|u_2|$. Then, $w_1xv$ $(2K)$-fellow travels with $w=w_1yw_2$, $w_1xv\sll w$ and $w=_Gw_1xv$, so $G$ has $\slfftp$.
\end{proof}

We show the analogue
to the fact that $\fftp$ implies finitely many cone types.

\begin{proposition} \label{lem: fftp implies finite number of sl cones}
    Let $G$ be a group with finite generating set $\Sigma$. If $G$ satisfies the $\slfftp$ with respect to $\Sigma$, then $G$ has a finite number of $\slrep$-cones.
\end{proposition}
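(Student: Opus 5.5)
The plan is to imitate the Neumann--Shapiro argument that $\fftp$ implies finitely many cone types. I will attach to each $g\in G$ a finite amount of local data, a ``shortlex $K$-type'', and show two things: only finitely many types occur, and the type of $g$ determines $\csl(g)$. Here $K$ is the $\slfftp$ constant and $B(K)$ is the ball of radius $K$ about $1_G$ in the Cayley graph. Write $n_g=|\slrep(g)|$.

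For $h\in B(K)$ set $d_g(h)=|\slrep(gh)|-n_g$, so $d_g(h)\in[-K,K]$. Let $c_g(h)\in\{<,=,>\}$ be the lexicographic comparison of $\slrep(gh)$ and $\slrep(g)$, both truncated to length $\min(n_g,n_g+d_g(h))$. Let $t_g(h)$ be the suffix of length $|d_g(h)|\le K$ of whichever of $\slrep(gh),\slrep(g)$ is longer. The \emph{type} of $g$ is the function $h\mapsto (d_g(h),c_g(h),t_g(h))$ on $B(K)$. Since $B(K)$ is finite and each value lies in a finite set, there are finitely many types.

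The key observation is the following. Let $u,v_2$ be words with $|\slrep(gh)|+|v_2|=n_g+|u|$. Then whether $\slrep(gh)v_2\sll \slrep(g)u$ holds depends only on $(d_g(h),c_g(h),t_g(h))$, on $|u|$ and $|v_2|$, and on the first $K$ letters of $u$ and $v_2$. Indeed, when the total lengths agree, the lexicographic comparison is decided first by $c_g(h)$. If $c_g(h)$ is ``$=$'', the comparison continues with $t_g(h)$ against the corresponding prefix of $v_2$ or of $u$, and then with the remaining letters of $v_2$ and $u$. When the total lengths differ, only lengths matter. I expect making this bookkeeping precise to be the main technical obstacle, especially the sign cases of $d_g(h)$ and the case where $c_g(h)$ is ``$=$''.

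Now suppose $g$ and $g'$ have the same type and, for contradiction, $u\in\csl(g)\setminus\csl(g')$. The step I must check is that $\csl(g)$ is closed under taking prefixes, that is, that $\slg$ is prefix-closed. This is the usual property of shortlex normal forms: if $w=w_1w_2\in\slg$ and $w_1\notin\slg$, then replacing $w_1$ by $\slrep(w_1)$ gives a smaller word for the same element. Given $\slrep(g')u\notin\slg$, apply $\slfftp$ to get $v$ with $v=_G\slrep(g')u$, $v\sll \slrep(g')u$, and $v$ asynchronously $K$-fellow travelling $\slrep(g')u$. By the fellow traveller condition at the vertex $g'$ (time $n_{g'}$), some prefix $v_1$ of $v$ ends at $g'h$ with $h\in B(K)$. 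Write $v=v_1v_2$.

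Replacing $v_1$ by $\slrep(g'h)$ does not increase $v$ in the shortlex order. It either shortens $v$ or keeps its length and makes it lexicographically no larger, since $\slrep(g'h)$ is $\sll$-minimal. So we may assume $\slrep(g'h)v_2\sll \slrep(g')u$, with both words representing $g'hv_2=g'u$, so $hv_2=_G u$. By the key observation and equality of types, this yields $\slrep(gh)v_2\sll\slrep(g)u$. Since $gh\,v_2=_G g\,u$, we get $\slrep(g)u\notin\slg$, contradicting $u\in\csl(g)$. Hence equal types give equal $\slrep$-cones, and there are finitely many $\slrep$-cones. By \cref{lemma: construction automaton} this also gives regularity of $\slg$.
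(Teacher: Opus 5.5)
Your argument is correct, and it takes a genuinely different route from the paper's. The paper uses the coarser invariant $\mathcal{C}_g(h)=(\Delta_g(h),L_g(h),P_g(h))$: the length difference, whether $\slrep(gh)\lex\slrep(g)$, and whether $\slrep(gh)$ is a prefix of $\slrep(g)$. It then runs a case analysis on $v$:
\begin{itemize}
\item $v$ is strictly shorter;
\item $\slrep(g')$ is a prefix of $v$;
\item $g'h$ lies on the path of $\slrep(g')$, where it needs that suffixes of shortlex words are shortlex in order to match the leftover piece $w_1$ for $g$ and $g'$;
\item $g'h$ lies off that path, split by whether $|v_{g'h}|=|\slrep(g'h)|$.
\end{itemize}
You instead put enough into the type, namely the three-way comparison $c_g(h)$ and the overhang $t_g(h)$, that the truth of $\slrep(xh)v_2\sll\slrep(x)u$ becomes a function of the type of $x$ at $h$ and of the words $u,v_2$. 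After the single normalisation $v_1\mapsto\slrep(g'h)$, all of the paper's cases collapse into this one transfer step.

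Your route gives a short, uniform proof with no geometric case distinctions; the paper's gives a smaller invariant. In fact $t_g(h)$ is redundant. When $c_g(h)$ is ``$=$'', suffix-closure of $\slg$ forces $t_g(h)=\slrep(h)$ if $d_g(h)>0$ and $t_g(h)=\slrep(h^{-1})$ if $d_g(h)<0$. So $(d_g(h),c_g(h))$ already suffices. That pair is exactly the paper's data plus one bit recording, when $d_g(h)>0$, whether $\slrep(g)$ is a prefix of $\slrep(gh)$. The paper sidesteps that situation by branching on whether $\slrep(g')$ is a prefix of $v$ itself.

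Two small corrections. First, your key observation as literally stated is false. When $c_g(h)$ is ``$=$'', the comparison reduces to $t_g(h)v_2$ versus $u$ (if $d_g(h)\ge 0$) or to $v_2$ versus $t_g(h)u$ (if $d_g(h)<0$). This involves all letters of $u$ and $v_2$, not just the first $K$. What you actually need, and what your sketch proves, is only that for fixed $h,u,v_2$ the answer depends on $g$ solely through its type. Since the same $u,v_2$ are used for $g$ and $g'$, that suffices, and it also shows the bookkeeping you expected to be the main obstacle is immediate. Second, prefix-closure of $\slg$ is never used in your argument, since you do not choose $u$ of minimal length.
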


\begin{proof}
    Let $K$ be the SL-FFTP constant. Denote by $B_K(1_G)$ the ball of radius $K$ and center $1_G$ in the Cayley graph of $G$ with respect to $\Sigma$. For $g\in G$ and $h\in B_K(1_G)$, we define
    \[\Delta_g(h):= |gh|_G-|g|_G,\]

\[
L_g(h):=
\left\{
\begin{array}{ll}
0,& \text{if $\slrep(gh)\lex\slrep(g)$},\\
1, & \text{if not},
\end{array}
\right.
\]
    
    \[
P_g(h):=
\left\{
\begin{array}{ll}
0,& \text{if $\slrep(gh)$ is not a prefix of $\slrep(g)$},\\
1, & \text{if $\slrep(gh)$ is a prefix of $\slrep(g)$}.
\end{array}
\right.
\]

Notice that $\Delta_g(h)\in [-K,K]$ for all $g\in G, h \in B_K(1_G)$. Thus,  we may define:
\[\begin{array}{rcl}
    \mathcal{C}_g: B_K(1_G) & \longrightarrow & [-K,K]\times \{0,1\}^2   \\
     h&  \longmapsto & (\Delta_g(h), L_g(h), P_g(h)).
\end{array}\]

Observe that $\{\mathcal{C}_g\}_{g\in G}$ is a finite set. So, if we take $g,g'\in G$ with $\mathcal{C}_g(h)=\mathcal{C}_{g'}(h)$ for all $h\in B_K(1_G)$, it is enough to show that $\csl(g)=\csl(g')$ in order to prove that the number of $\slrep$-cones of $G$ is finite.

Assume that $g,g'\in G $ satisfy that $\mathcal{C}_g(h)=\mathcal{C}_{g'}(h)$ for all $h\in B_K(1_G)$. Suppose, for a contradiction, that there exists $w\in \csl (g)-\csl(g')$. Since $G$ satisfies $\slfftp$ and $\slrep(g')w$ is not in $\slg$, there is a word $v$ that $K$-fellow travels with $\slrep(g')w$ and such that $v \sll \slrep(g')w$ and $v=_G \slrep (g')w$. We distinguish five cases, the first one is analogous to the classical proof for FFTP and the last four are depicted in \cref{fig:cones}.
\begin{figure}[h]
    \centering
    \par\bigskip
    \begin{subfigure}[t]{\textwidth}
    \centering
    \begin{tikzpicture}[thick,scale=.6,arrowmark/.style 2 args={decoration={markings,mark=at position #1 with \arrow{#2}}}]%
      \tikzstyle{every node}=[circle, draw, fill=blue, color=blue,
                            inner sep=0pt, minimum width=6pt]
     \begin{pgfonlayer}{background}                       
                            \draw[-,>=stealth] (0,0) -- (2,0) -- (4, 0);
                            \draw[-] (4, 0) to[out=90, in=90, looseness=0.8] (7,0);
                            \draw[-] (4,0) to[out=-90, in=-90, looseness=0.7]  (7, 0);
    \end{pgfonlayer}                      
    \draw (0,0)  node[draw=none,fill=black, color=black, inner sep=0.5pt, minimum width=5pt, label = $1_G$] {};
    \draw (2,0)  node [draw=none,fill=black, color=black, inner sep=0.5pt, minimum width=5pt, label=$g'$]{};
    \draw (7,0)  node [draw=none,fill=black, color=black, inner sep=0.5pt, minimum width=5pt, label = {0:$g'w$}](gw){};
    \draw (4,0.5)  node [draw=none,fill=none, color=black, inner sep=0.5pt, minimum width=5pt, label = $w$](w){};
    \draw (4,-0.5)  node [draw=none,fill=none, color=black, inner sep=0.5pt, minimum width=5pt, label ={-90:$u$}](u){};
    \end{tikzpicture}
    \caption{Case 2.1.}
    \label{fig:21}
    \end{subfigure}%
    \vspace{1.5em}
    \begin{subfigure}[t]{\textwidth}
    \centering
    \begin{tikzpicture}[thick,scale=.6,arrowmark/.style 2 args={decoration={markings,mark=at position #1 with \arrow{#2}}}]%
      \tikzstyle{every node}=[circle, draw, fill=blue, color=blue,
                            inner sep=0pt, minimum width=6pt]
     \begin{pgfonlayer}{background}                       
                            \draw[-,>=stealth] (0,0) -- (2,0) -- (4, 0);
                            \draw[-] (4, 0) to[out=00, in=100, looseness=0.8] (7,-1);
                            \draw[-] (2,0) to[out=-90, in=180, looseness=0.7]  (7, -1);
    \end{pgfonlayer}                      
    \draw (0,0)  node[draw=none,fill=black, color=black, inner sep=0.5pt, minimum width=5pt, label = $1_G$] {};
    \draw (4,0)  node [draw=none,fill=black, color=black, inner sep=0.5pt, minimum width=5pt, label = 
    $g'$](g){};
    \draw (2,0)  node [draw=none,fill=black, color=black, inner sep=0.5pt, minimum width=5pt, label=$g'h$]{};
    \draw (7,-1)  node [draw=none,fill=black, color=black, inner sep=0.5pt, minimum width=5pt, label = {0:$g'w$}](gw){};
    \draw (5.5,0)  node [draw=none,fill=none, color=black, inner sep=0.5pt, minimum width=5pt, label = $w$](w){};
    \draw (3.5,-1)  node [draw=none,fill=none, color=black, inner sep=0.5pt, minimum width=5pt, label ={-90:$u$}](u){};
    \draw (1,0)  node [draw=none,fill=none, color=black, inner sep=0.5pt, minimum width=5pt, label ={-90:$v_{g'h}$}](){};
    \end{tikzpicture}
    \caption{Case 2.2.A: we factorise $v=v_{g'h}u$.}
    \label{fig:22a}
    \end{subfigure}%
    \vspace{1.5em}
    \begin{subfigure}[t]{\textwidth}
    		\centering
    \begin{tikzpicture}[thick,scale=.6,arrowmark/.style 2 args={decoration={markings,mark=at position #1 with \arrow{#2}}}]%
      \tikzstyle{every node}=[circle, draw, fill=blue, color=blue,
                            inner sep=0pt, minimum width=6pt]
     \begin{pgfonlayer}{background}                       
                            \draw[-,>=stealth] (0,0) -- (2,0) -- (4, 0);
                            \draw[-] (4, 0) to[out=00, in=100, looseness=0.8] (7,-1);
                            \draw[-] (4,-2) to[out=0, in=-100, looseness=1]  (7, -1);
                            \draw[-] (2,0) to[out=-100, in=180, looseness=0.9]  (4, -2);
    \end{pgfonlayer}                      
    \draw (0,0)  node[draw=none,fill=black, color=black, inner sep=0.5pt, minimum width=5pt, label = $1_G$] {};
    \draw (4,0)  node [draw=none,fill=black, color=black, inner sep=0.5pt, minimum width=5pt, label = 
    $g'$](g){};
    \draw (2,0)  node [draw=none,fill=black, color=black, inner sep=0.5pt, minimum width=5pt]{};
    \draw (4,-2)  node [draw=none,fill=black, color=black, inner sep=0.5pt, minimum width=5pt, label = 
    {-90:$g'h$}](gh){};
    \draw (7,-1)  node [draw=none,fill=black, color=black, inner sep=0.5pt, minimum width=5pt, label = {0:$g'w$}](gw){};
    \draw (5.5,0)  node [draw=none,fill=none, color=black, inner sep=0.5pt, minimum width=5pt, label = $w$](w){};
    \draw (5.5,-2)  node [draw=none,fill=none, color=black, inner sep=0.5pt, minimum width=5pt, label ={-90:$u$}](u){};
    \draw (1,0)  node [draw=none,fill=none, color=black, inner sep=0.5pt, minimum width=5pt, label = 
    {$v_1$}](v1){};
    \draw (3,0)  node [draw=none,fill=none, color=black, inner sep=0.5pt, minimum width=5pt, label = 
    {$w_1$}](w1){};
    \draw (2.5,-1.5)  node [draw=none,fill=none, color=black, inner sep=0.5pt, minimum width=5pt, label = 
    {-90:$v_2$}](v2){};
    \end{tikzpicture}
    \caption{Case 2.2.B.I: here $\slrep(g')w=v_1w_1w$, and $v=v_{g'h}u=v_1v_2u$.}
    \label{fig:22bi}
    \end{subfigure}%
    \vspace{1.5em}
    \begin{subfigure}[t]{\textwidth}
    \centering
    \begin{tikzpicture}[thick,scale=.6,arrowmark/.style 2 args={decoration={markings,mark=at position #1 with \arrow{#2}}}]%
      \tikzstyle{every node}=[circle, draw, fill=blue, color=blue,
                            inner sep=0pt, minimum width=6pt]
     \begin{pgfonlayer}{background}                       
                            \draw[-,>=stealth] (0,0) -- (2,0) -- (4, 0);
                            \draw[-] (4, 0) to[out=00, in=100, looseness=0.8] (7,-1);
                            \draw[-] (4,-2) to[out=0, in=-100, looseness=1]  (7, -1);
                            \draw[-] (2,0) to[out=-100, in=180, looseness=1.3]  (4, -2);
    \end{pgfonlayer}                      
    \draw (0,0)  node[draw=none,fill=black, color=black, inner sep=0.5pt, minimum width=5pt, label = $1_G$] {};
    \draw (4,0)  node [draw=none,fill=black, color=black, inner sep=0.5pt, minimum width=5pt, label = 
    $g'$](g){};
    \draw (4,-2)  node [draw=none,fill=black, color=black, inner sep=0.5pt, minimum width=5pt, label = 
    {-90:$g'h$}](gh){};
    \draw (7,-1)  node [draw=none,fill=black, color=black, inner sep=0.5pt, minimum width=5pt, label = {0:$g'w$}](gw){};
    \draw (5.5,0)  node [draw=none,fill=none, color=black, inner sep=0.5pt, minimum width=5pt, label = $w$](w){};
    \draw (5.5,-2)  node [draw=none,fill=none, color=black, inner sep=0.5pt, minimum width=5pt, label ={-90:$u$}](u){};
    \end{tikzpicture}
    \caption{Case 2.2.B.II.}
    \label{fig:22bii}
    \end{subfigure}
    \caption{Some of the cases studied in \cref{lem: fftp implies finite number of sl cones}: straight lines correspond to $\slg$ representatives, the upper path from $1_G$ to $g'w$ is given by $\slrep(g')w$, and the lower path with the same endpoints is given by $v$ and has $u$ as a suffix.}
    \label{fig:cones}
\end{figure}

\textbf{Case 1.} Suppose $|v|<|\slrep(g')w|$. By SL-FFTP, there exists $h\in B_K(1_G)$ such that the vertex $g'h$ lies in the path labelled by $v$. Then, if $u$ denotes the suffix of $v$ starting at $g'h$, we have the following:
 \[\begin{array}{rcl}
        |\slrep(gh)u| & = & |\slrep(gh)|+|u|-|\slrep(g)|+|\slrep(g)| \\
         & \stackrel{\Delta_g(h)=\Delta_{g'}(h)}{=} & |\slrep(g'h)|+|u|-|\slrep(g')|+|\slrep(g)| \\
         & \leq & |v|-|\slrep(g')|+|\slrep(g)| \\
         & < & |\slrep(g')|+|w|-|\slrep(g')|+|\slrep(g)| = |\slrep(g)w|,
         \end{array}\]
         which contradicts the fact that $\slrep(g)w\in \slg$.

\textbf{Case 2.} Assume $|v|=|\slrep(g')w|$ (in particular, $v\lex \slrep(g')w$). 

\textbf{Case 2.1} If $\slrep(g')$ is a prefix of $v$, we denote by $u$ the suffix of $v$ starting at $g'$. 
So, in this case we have that $v= \slrep(g')u$, $v=_G \slrep(g')w$ and $|v|=|\slrep(g')w|$.
Then $u=_G w$, $u\lex w$ and $|u|=|w|$. From this last equality, $|\slrep(g)u|=|\slrep(g)w|$. Therefore, $\slrep(g)u\lex\slrep(g)w$, which contradicts the fact that $w\in \csl(g)$.

\textbf{Case 2.2} Assume that $\slrep(g')$ is not a prefix of $v$.
By $\slfftp$, there exists $h\in B_K(1_G)$ so that $g'h$ is a vertex lying in the  path described by $v$ starting at $1_G$. 
There are two cases depending on whether $g'h$ is a vertex in the path  described by  $\slrep(g')$ starting at $1_G$ or not.

\textbf{Case 2.2.A} Suppose that  $g'h$ is a vertex lying in the path  described by  $\slrep(g')$ starting at $1_G$.  We denote by $v_{g'h}$ the prefix of $v$ that labels the path from $1_G$ to the element $g'h$. And we denote by $u$ the suffix of $v$ starting at $g'h$. Observe that $v=v_{g'h}u$. Now, because $g'h$ lies in the path described by $\slrep(g')$, we have that $\slrep(g'h)$ is a prefix of $\slrep(g')$ and, as a consequence, $\slrep(g'h)u$ and $\slrep(g')w$ $K$-fellow travel (notice that they also represent the same element of the group). By definition, $\slrep(g'h)\leq_{\text{SL}} v_{g'h}$, so $\slrep(g'h)u \leq_{\text{SL}} v_{g'h}u=v\sll \slrep(g')w$. Since the three main properties of the path $v$ given by SL-FFTP are also satisfied by  $\slrep(g'h)u$, we can take $v=\slrep(g'h)u$ without loss of generality.

We know that $v\sll \slrep(g')w$ and $|v|=|\slrep(g')w|$, so $v\lex \slrep(g')w$. We factorise $\slrep(g')w=\slrep(g'h)w_1w$ and recall that $v=\slrep(g'h)u$.  As $\slrep(g'h)$ is a prefix of both $v$ and $\slrep(g')$, we have that $u\lex w_1w$ and, because $|u|=|w_1w|$, we have $u\sll w_1w$. Therefore, $\slrep(gh)u\sll \slrep(gh)w_1w=\slrep(g)w$, where the last word equality is true because $w_1=_G h^{-1}$ and $w_1\in \slg$ since it is a subword of the shortlex word $\slrep(g')$. 

\textbf{Case 2.2.B} Suppose that  $g'h$ is not  a vertex lying in the paths described by $v$ and $\slrep(g')$ starting at $1_G$. 
We will denote by $v_{g'h}$ the prefix of $v$ that labels the path from $1_G$ to the element $g'h$. And we denote by $u$ the suffix of $v$ starting at $g'h$.

\textbf{Case 2.2.B.I} Suppose $|v_{g'h}|=|\slrep(g'h)|$. 

    Let us write $v_{g'h}=v_1v_2$, where $v_1$ is a maximal prefix of $\slrep(g')$ and also factorise $\slrep(g')w=v_1w_1w$. By the maximality in the choice of $v_1$ and since $\slrep(g')$ is not a prefix of $v$, we have $v_2\lex w_1$ and, therefore, $v_{g'h}\lex \slrep(g')$. Also, observe that because $\slrep(g'h)\leq_{\text{SL}}v_{g'h}$ and we are assuming that $|v_{g'h}|=|\slrep(g'h)|$, we have that $\slrep(g'h)\leq_{\text{lex}} v_{g'h}$. By transitivity, $\slrep(g'h)\lex \slrep(g')$. Because we are in case 2.2, $\slrep(g'h)$ is not a prefix of $\slrep(g')$. Now, since $C_g=C_{g'}$, and using that $P_g(h)=P_{g'}(h)$ and $L_g(h)=L_{g'}(h)$,  we also have that  $\slrep(gh)\lex \slrep(g)$ and $\slrep(gh)$ is not a prefix of $\slrep(g)$.

    We have:
 \[\begin{array}{rcl}
        |\slrep(gh)u| & = & |\slrep(gh)|+|u|-|\slrep(g)|+|\slrep(g)| \\
         & \stackrel{\Delta_g(h)=\Delta_{g'}(h)}{=} & |\slrep(g'h)|+|u|-|\slrep(g')|+|\slrep(g)| \\
         & = & |v|-|\slrep(g')|+|\slrep(g)| \\
         & = & |\slrep(g')|+|w|-|\slrep(g')|+|\slrep(g)| = |\slrep(g)w|.
         \end{array}\]
    From this equality, together with the two previous observations, we conclude that $\slrep(gh)u\sll \slrep(g)w$, contradicting the fact that $w\in \csl(g)$.
    
\textbf{Case 2.2.B.II} Suppose $|v_{g'h}|>|\slrep(g'h)|$. Then we have:
 \[\begin{array}{rcl}
        |\slrep(gh)u| & = & |\slrep(gh)|+|u|-|\slrep(g)|+|\slrep(g)| \\
         & \stackrel{\Delta_g(h)=\Delta_{g'}(h)}{=} & |\slrep(g'h)|+|u|-|\slrep(g')|+|\slrep(g)| \\
         & < & |v_{g'h}|+|u|-|\slrep(g')|+|\slrep(g)| \\
         & = &  |v|-|\slrep(g')|+|\slrep(g)| \\
         & = & |\slrep(g')|+|w| -|\slrep(g')|+|\slrep(g)| \\
         & = & |\slrep(g)w|.
         \end{array}\]
This contradicts the fact that $w\in \csl(g)$.
\end{proof}

\begin{corollary} \label{prop: fftp implies regular language}
    Let $G$ be a group with finite generating set $\Sigma$ satisfying $\slfftp$, then $\slg$ is a regular language.
\end{corollary}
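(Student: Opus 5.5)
The plan is to chain the two results just established. First, by \cref{lem: fftp implies finite number of sl cones}, the hypothesis that $G$ satisfies $\slfftp$ with respect to $\Sigma$ gives that $G$ has only finitely many $\slrep$-cones $\csl(g)$, $g\in G$. Second, the ``if'' direction of \cref{lemma: construction automaton} turns finitely many $\slrep$-cones into a finite state automaton accepting $\slg$, so $\slg$ is regular.

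The only point that deserves a check is that the automaton described in \cref{lemma: construction automaton} really accepts exactly $\slg$. Its states are the cones, the initial state is $\csl(1_G)$, all states are accepting, and there is an $s$-transition $\csl(g)\to\csl(gs)$ exactly when $s\in\csl(g)$. I would first verify that this transition rule is well defined on cones. If $\csl(g)=\csl(g')$ and $s$ lies in this common cone, then $\slrep(g)s\in\slg$, so $\slrep(gs)=\slrep(g)s$. Hence
\[
\csl(gs)=\{w: \slrep(g)sw\in\slg\}=\{w: sw\in\csl(g)\},
\]
and the same description holds for $g'$. Therefore $\csl(gs)=\csl(g's)$, and the automaton is deterministic.

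Next I would show by induction on $|w|$ that reading $w=s_1\cdots s_n$ from $\csl(1_G)$ succeeds if and only if $w\in\slg$. The key fact is that $\slg$ is prefix-closed: a prefix of a shortlex-minimal word is shortlex-minimal. Given this, if every prefix of $w$ is in $\slg$, then each step reads $s_i\in\csl(s_1\cdots s_{i-1})$, since $\slrep(s_1\cdots s_{i-1})=s_1\cdots s_{i-1}$. Conversely, a successful run forces $s_1\cdots s_i\in\slg$ at each step.

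I expect no real obstacle here, since the substantive work was done in \cref{lem: fftp implies finite number of sl cones}. The one subtle point is the well-definedness and prefix-closure argument above, which justifies that ``all states accepting'' is correct.
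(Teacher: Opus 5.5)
Your proposal is correct and takes the same route as the paper, which gives no separate proof of this corollary because it is the immediate combination of \cref{lem: fftp implies finite number of sl cones} ($\slfftp$ gives finitely many $\slrep$-cones) with the cone-automaton construction in \cref{lemma: construction automaton}. Your checks that the transition rule is well defined on cones and that $\slg$ is prefix-closed supply details that the paper's proof of that lemma leaves implicit.
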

The following corollary can be proved as done in \cite[Proposition 4.2]{Neumann_Shapiro_1995} for the case of geodesics, in fact, the case of shortlex representatives is simpler since the $\slg$ is in bijection with the group.
\begin{corollary}\label{coro:slfftp_rational_growth}
    Let $G=\langle \Sigma\rangle$ be a group with regular set of shortlex representatives $\slg$ for some ordering of $\Sigma\cup\Sigma^{-1}$. Then, $G$ has rational growth function with respect to this generating set. 
\end{corollary}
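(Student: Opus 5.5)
The plan is the standard Neumann--Shapiro transfer-matrix argument. It is simpler here because $\slrep$ restricted to $G$ is a bijection onto $\slg$, so no multiplicities have to be controlled.

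Let $\A=(\Sigma\cup\Sigma^{-1},V,E,V_F,v_0)$ be a finite state automaton with $L(\A)=\slg$. First I will make $\A$ deterministic, using the standard subset construction from \cite{epstein92}. This matters because in a deterministic automaton each accepted word labels exactly one accepting path. Since each $g\in G$ has a unique shortlex representative $\slrep(g)$, and $|\slrep(g)|=|g|_G$ (shortlex minimal words are geodesic, being minimal first in length), the growth function satisfies
\[\gamma(n):=\#\{g\in G: |g|_G=n\}=\#\{w\in\slg: |w|=n\}.\]
The right-hand side equals the number of paths of length $n$ in $\A$ from $v_0$ to $V_F$.

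Next, let $M$ be the $|V|\times|V|$ transfer matrix, where $M_{pq}$ is the number of transitions from $p$ to $q$. Then the number of such paths is $e_{v_0}^{T}M^n\chi_{V_F}$, where $\chi_{V_F}$ is the indicator vector of $V_F$. The generating function is therefore
\[\sum_{n\ge0}\gamma(n)x^n=e_{v_0}^{T}(I-xM)^{-1}\chi_{V_F}.\]
By Cramer's rule this is a rational function of $x$. The same holds for the cumulative growth $\sum_{k\le n}\gamma(k)$, obtained by multiplying by $1/(1-x)$.

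The only step needing care is the bijection between group elements of length $n$ and accepted words of length $n$. This requires two facts: that $\slrep(g)$ is geodesic, and that determinism rules out counting one word several times. Both are routine, so I expect no genuine obstacle.
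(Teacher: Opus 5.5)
Your proposal is correct and takes the same route as the paper. The paper only defers to the counting argument of \cite[Proposition 4.2]{Neumann_Shapiro_1995}, noting that the bijection $g\mapsto\slrep(g)$ between $G$ and $\slg$ (with $|\slrep(g)|=|g|_G$) avoids the multiplicity issues that arise for geodesics, and your deterministic automaton with the transfer-matrix computation is that argument written out in full (the cone automaton of \cref{lemma: construction automaton} is already deterministic, so you could skip the subset construction).
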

As it happens in the analogous case of geodesics, the converse to the previous result does not hold:
    The Heisenberg group $G = \langle a, b \;|\; [a, b]\text{ is central } \rangle $ has rational growth with respect to the generating set $\{a, b\}$ but the set of shortlex representatives is not regular with respect to any ordering of the generating set.
The fact that the group has rational growth follows from the main result in \cite{duchinshapiro19}. The arguments in \cite[Proposition 1.1 and Proposition 1.5]{warshall10} that show that the set of geodesics in $G$ is not regular works exactly the same to see that $\slg$ is not regular either.

\section{SL-cones on an example of Cannon}\label{sec:fftp_vs_slfftp}

In \cite{elder05},  Elder furnishes an example, that he attributes to Cannon, of a group and a generating set whose set of geodesics is regular and does not satisfy the falsification by fellow traveller property. The group in question is given by 
$$G= \langle a, t \;|\; t^2=1, atat =tata\rangle.$$
We will use this example to illustrate the relation between the falsification by fellow traveller property and its shortlex version. More specifically, we will see that the set of shortlex representatives $\slg$ is a regular language for any total ordering of the generators $\{a, a^{-1}, t\}$ but that there is an ordering of $\{a, a^{-1}, t\}$ such that the group does not have the shortlex FFTP. Moreover, there are orderings for which the group does have the shortlex FFTP.

Before we go into the proofs, we refer the reader to \cite{elder05} for further details and intuition about the elements of the group $G$, its Cayley graph and the paths in it. In particular, we will use the fact that, having chosen a ``base vertex'' in the Cayley graph (representing $1_G$), each vertex in the Cayley graph and hence each element in the group can be uniquely specified by a triple $(x, y, \varepsilon)$ where $x,y\in\Z$ and $\varepsilon\in\{+, -\}$. Intuitively, this triple comes from the fact that the vertices of the Cayley graph of $G$ with respect to the given generators are in a natural bijection with $\Z^2\times \{+,-\}$: we refer to $\Z^2\times\{-\}$ as the \emph{bottom layer} and to $\Z^2\times\{+\}$ as the \emph{top layer} of the graph. Multiplication by the generator $t$ maps a vertex in one of the layers to the same vertex in the other layer. Multiplication by the generator $a$ determines horizontal displacement in the bottom layer, and determines vertical displacement in the top layer. We set the identity $1_G$ to have coordinates $(0,0,-)$.

Lastly, a word on the geodesics in $G$ will be needed for the following proofs. We recall from \cite[Theorem 3.5]{elder05} that the set 
$$\{a^x,\; a^xta^y,\; a^{x_1}ta^{y'}ta^{x_2}:\;x, x_1, x_2, y \in \Z, x_1\cdot x_2\geq 0, y'\in \Z-\{0\} \}$$
is the language of all geodesics in $G$ with the given generating set. Moreover, following \cite[Lemma 3.1]{elder05} we know that the subset of words of the form $a^xta^y$ are \emph{unique} geodesics, that is, for each of those words there is no other geodesic representing the same group element.  The words of the form $a^x$ are easily seen to be unique geodesics as well by taking into account that the expression of the corresponding Cayley graph's vertex with the coordinates described above is $(x, 0, -)$. Note that, because of the uniqueness, these two families of geodesics are indeed shortlex representatives in $\slg$. 

The following lemma will be used to manipulate words in the rest of the section:
\begin{lemma}\label{lemma:exponent_game}
    Let $G=\langle a, t\rangle$ as above. For any integers $x_1, x_2, y_1, y_2$ we have:
    $$a^{x_1}ta^{y_1}ta^{x_2}ta^{y_2}=_G a^{x'_1}ta^{y_1'}ta^{x'_2}ta^{y_2'} $$ if and only if $x_1+x_2=x_1'+x_2', \; y_1+y_2=y_1'+y_2'.$ In particular, 
    $$a^{x_1}ta^{y}ta^{x_2} =_G a^{x'_1}ta^{y}ta^{x'_2}$$
    if and only if $x_1+x_2=x_1'+x_2'$.
\end{lemma}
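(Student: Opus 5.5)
The plan is to reduce everything to a normal form for elements of $G$. The cleanest route is algebraic. Set $b:=tat$. The relation $atat=tata$ reads $a\cdot b=b\cdot a$, so $a$ and $b$ commute. Since $t^2=1$, we have $t a^{y} t=b^{y}$ and $t a^{y}=b^{y}t$ for every $y\in\Z$. Hence
\[
a^{x_1}ta^{y_1}ta^{x_2}ta^{y_2}=_G a^{x_1}b^{y_1}a^{x_2}b^{y_2}t=_G a^{x_1+x_2}b^{y_1+y_2}t .
\]
The same rewriting applies to the primed word. This immediately gives the ``if'' direction: if $x_1+x_2=x_1'+x_2'$ and $y_1+y_2=y_1'+y_2'$, both words equal $a^{x_1+x_2}b^{y_1+y_2}t$ in $G$.

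For the ``only if'' direction we need that $a^{m}b^{n}t=_G a^{m'}b^{n'}t$ forces $m=m'$ and $n=n'$. Equivalently, $a^{m}b^{n}=1$ in $G$ only when $m=n=0$. To show this, I would define a homomorphism $\phi\colon G\to \Z^2\rtimes \Z/2$, where the nontrivial element of $\Z/2$ acts by swapping the coordinates, by
\[
\phi(a)=((1,0),0),\qquad \phi(t)=((0,0),1).
\]
The relations must be checked. First, $\phi(t)^2=1$ is clear. Second, $\phi(tat)=((0,1),0)$, which commutes with $\phi(a)$ in $\Z^2$, so $atat=tata$ is respected. Then
\[
\phi(a^{m}b^{n}t)=((m,n),1),
\]
so equal group elements have equal $(m,n)$. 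Applying this to $m=x_1+x_2$, $n=y_1+y_2$ and to the primed sums gives the claim.

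Alternatively, one can follow the coordinate model $(x,y,\varepsilon)$ of the Cayley graph recalled above from \cite{elder05}. Starting at $(0,0,-)$, the word traces
\[
(x_1,0,-)\to(x_1,0,+)\to(x_1,y_1,+)\to(x_1,y_1,-)\to(x_1+x_2,y_1,-)\to(x_1+x_2,y_1,+)\to(x_1+x_2,y_1+y_2,+).
\]
Since vertices are in bijection with group elements, two such words are equal if and only if the endpoints agree. The homomorphism $\phi$ is in fact exactly this model.

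The particular case $a^{x_1}ta^{y}ta^{x_2}$ follows the same way. It equals $a^{x_1+x_2}b^{y}$, whose image under $\phi$ is $((x_1+x_2,y),0)$. One could also append $ta^0$ and apply the first statement, then cancel the trailing $t$.

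The only real point requiring care is the injectivity step, namely that $\langle a,b\rangle\cong\Z^2$ with no hidden relations. The homomorphism $\phi$ handles it directly, so no step is expected to be a serious obstacle.
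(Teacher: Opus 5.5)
Your proof is correct and follows the paper's. The ``if'' direction is the same commutation $a(ta^nt)=(ta^nt)a$. For ``only if'' the paper reads off the endpoint $(x_1+x_2,y_1+y_2,+)$ in Elder's coordinate model, which is exactly your second route. Your homomorphism $\phi\colon G\to\Z^2\rtimes\Z/2$ is a worthwhile addition: checking the relations on $\phi$ proves directly that these coordinates are well defined on group elements. That is the one fact the ``only if'' direction needs, and the paper instead imports it from \cite{elder05}.
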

\begin{proof}
    Suppose $x_1+x_2=x_1'+x_2', \; y_1+y_2=y_1'+y_2'.$ Since $a(tat)=(tat)a$, we also have that $a(tat)^n=(tat)^na$ and, since $t^2=1$, that $a(ta^nt)=(ta^nt)a$. Using this equality, we see that $a^{x_1}ta^{y_1}ta^{x_2}ta^{y_2}=_Ga^{x_1+x_2}ta^{y_1+y_2}$ and $a^{x_1'}ta^{y_1'}ta^{x_2'}ta^{y_2'}=_Ga^{x_1'+x_2'}ta^{y_1'+y_2'}$, so this implication is verified.

    Suppose now that $a^{x_1}ta^{y_1}ta^{x_2}ta^{y_2}=_G a^{x'_1}ta^{y_1'}ta^{x'_2}ta^{y_2'} $. A word of the form $a^{x_1}ta^{y_1}ta^{x_2}ta^{y_2}$ represents the vertex with coordinates $(x_1+x_2,y_1+y_2,+)$ in the Cayley graph, therefore $(x_1+x_2,y_1+y_2)=(x_1'+x_2',y_1'+y_2')$ and we obtain the other implication.
\end{proof}
\begin{proposition}\label{prop:ordering_non_slfftp}
    The group $G$ does not satisfy the shortlex FFTP for the shortlex ordering induced by $a<t<a^{-1}$.
\end{proposition}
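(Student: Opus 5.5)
The plan is to exhibit an explicit family of words $w_n\notin\slg$ for which every word $v$ with $v=_G w_n$ and $v\sll w_n$ stays far from $w_n$, at a distance that grows with $n$. This contradicts \cref{def: SLFFTP} for every constant $K$.

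First I will rule out geodesic words as candidates, since they do not work for this ordering. Take a geodesic $w=a^{x_1}ta^{y}ta^{x_2}$. By \cref{lemma:exponent_game}, the lexicographically smaller equal-length representatives are obtained by shifting one letter of the $a$-exponent between the two outer blocks. For example, $a^{-n}ta^{y}t$ is beaten by $a^{-(n-1)}ta^{y}ta^{-1}$. This witness fellow travels with $w$ at a uniformly bounded distance, because adjacent vertical columns of the top layer are at distance $3$ through $tat$. So the witnesses must come from non-geodesic words.

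Following Elder's failure of $\fftp$ in \cite{elder05}, I will take
\[
w_n=a^{n}ta^{n}ta^{-n}=_G ta^{n}t,
\]
which has length $3n+2$. I chose it to begin with $a^n$ because $a$ is the smallest letter in $a<t<a^{-1}$. Then any $v\sll w_n$ that has the same length and agrees with $w_n$ for a while must first differ at a position where $v$ carries a smaller letter. The point of this choice is to keep the set of competitors controlled.

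The steps are as follows.

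\begin{enumerate}[(i)]
\item Compute coordinates along the path of $w_n$. It runs horizontally in the bottom layer from $(0,0,-)$ to $(n,0,-)$, then vertically in the top layer to $(n,n,+)$, then horizontally back to the endpoint $(0,n,-)$. I will use the midpoint $p_n=a^nta^{\lfloor n/2\rfloor}$, which lies at distance roughly $n/2$ from the region near the geodesic $ta^nt$.

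\item Classify all $v$ with $v=_G w_n$ and $|v|\le |w_n|$. I will use the description of geodesics from \cite[Theorem 3.5]{elder05} together with the layer/coordinate picture: every $t$ switches layer, and $a$-letters move horizontally in the bottom layer and vertically in the top layer. From this I will show that any path reaching the vertex $(0,n,-)$ must spend about $n$ steps in the top layer. Any such path that comes within $K$ of $p_n$ must therefore do so with a prefix of roughly the form $a^{n\pm O(K)}ta^{\sim n/2}$.

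\item Show that such a prefix forces $v\not\sll w_n$. If $|v|<|w_n|$, the length count from step (ii) fails: after reaching near $(n,\cdot)$ the path must return horizontally through about $n$ letters, so $|v|\ge 3n+2-O(K)$. I will sharpen this to the exact bound $|v|\ge 3n+2$ by a parity and coordinate argument, adapting Elder's computation. If $|v|=|w_n|$, then $v$ must agree with $w_n$ up to the first position where it could become lexicographically smaller. The only possible smaller letters are $a$ in place of $t$, or $a$ or $t$ in place of $a^{-1}$. I will check that each such replacement, made near the fellow-travelling region, either changes the endpoint or forces extra length.
\end{enumerate}

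The main obstacle is step (iii) in the equal-length case. There $v$ need not be geodesic, so the classification from \cite{elder05} does not apply directly. I would first try to control $v$ through the coordinate function, counting top-layer vertical displacement and bottom-layer horizontal displacement separately. A word of length $3n+2$ ending at $(0,n,-)$ has at most $n+O(1)$ spare letters, and I expect this to pin down its shape to a bounded number of $t$'s. If this fails, my fallback is to modify the family, for instance to $a^{n}ta^{n}ta^{-n}ta^{-n}t$ or by inserting an $a^{-1}$ block, so that lexicographically smaller competitors are forced to deviate early and hence far from $p_n$.
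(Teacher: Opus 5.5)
There is a genuine gap: the family $w_n=a^nta^nta^{-n}$ does not violate $\slfftp$. It has the shape $a^{x_1}ta^{y}ta^{x_2}$ with $x_1x_2<0$. By \cref{lemma:exponent_game} only $x_1+x_2$ matters, so $w_n=_G v_n:=a^{n-1}ta^{n}ta^{-(n-1)}$. Since $|v_n|=3n<3n+2=|w_n|$, we get $v_n\sll w_n$ for every ordering. Moreover $v_n$ asynchronously $3$-fellow travels with $w_n$: the two bottom-layer segments differ by at most one step, and the top-layer columns $x=n$ and $x=n-1$ are joined by $tat$. This is exactly the mechanism you used yourself to dismiss the geodesic candidates. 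So the bound $|v|\ge 3n+2$ that you plan to prove in step (iii) is false. The $O(K)$ slack you found is genuine, and a shortening by $2$ is all that $\slfftp$ requires. Your fallback $a^nta^nta^{-n}ta^{-n}t$ contains the same subword $a^nta^nta^{-n}$ and is shortened the same way; in fact any word containing $a^{x_1}ta^yta^{x_2}$ with $x_1x_2<0$ is falsified with constant $3$. A further warning sign is that in your outline the ordering $a<t<a^{-1}$ does no decisive work, whereas $G$ does have $\slfftp$ for the orderings in \cref{prop:some_orderings_slfftp}, so any valid witness must exploit the ordering.

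The missing idea is to use Elder's non-local obstruction instead of a locally shortenable word. The paper takes $w=ta^yta^xt$ with $y>0>x$, which represents $(x,y,+)$. A representative has an odd number of $t$'s, and at least $|x|+|y|$ letters $a^{\pm1}$, with that same parity. Hence a strictly shorter representative has length exactly $|x|+|y|+1$, so it is the unique geodesic $a^xta^y$. That geodesic runs along the opposite two sides of the rectangle: the vertex $(0,y,+)$ of $w$ is at distance at least $\min(|x|,|y|)$ from each of its vertices. An equal-length representative with three $t$'s has the form $a^{x_1}ta^{y_1}ta^{x_2}ta^{y_2}$ with $x_1,x_2\le 0\le y_1,y_2$, and this is where the ordering enters. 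Since $t<a^{-1}$, a lexicographically smaller such word must have $x_1=0$; then $a<t$ forces $y_1=y$, so the word equals $w$. If you write this out, also handle the equal-length words with a single $t$. The only lexicographically smaller one is $a\,a^{x-1}ta^{y}$, which follows the route of $a^xta^y$ and is therefore also far from $w$.
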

\begin{proof}
    Let $w = ta^yta^xt$, with $y>0$ and $x<0$.
    Because $a^xta^y$ represents the same group element as $w$, $w\not\in\slg$. We will see that any word $w'\sll w$ with $w'$ representing the same group element as $w$ remains ``far'' from $w$ in the Cayley graph, and therefore choosing large enough $|x|$, $|y|$ would contradict the existence of a uniform fellow travelling constant.

    The word $w= ta^yta^xt$ corresponds to an element $g\in G$ whose coordinates in the Cayley graph are $(x,y,+)$, and this means that any word representing  $g$ has, at least, $|x|+|y|$ occurrences of the letters $a$, $a^{-1}$. From \cite[Lemma 3.1]{elder05} we know that the parity of occurrences of $t$ is preserved among representatives of the same element, so a word $w'\sll w$ with $w'$ representing $g$ must be one of the following:
    \begin{itemize}
        \item A word of length $|w'|=|x|+|y|+1<|w|$ with exactly one occurrence of $t$.
        \item A word of length $|w'|=|w|=|x|+|y|+3$ with exactly three occurrences of $t$ and $|w'|\lex|w|$.
    \end{itemize}
    If $|w'|=|x|+|y|+1$, $w'$ must be $a^xta^y$ because this word is a unique geodesic as described above. In this case, $w$ and $w'$ have vertices at distance $\sim |x|+|y|$, which can be made arbitrarily large by changing $w$, hence contradicting the existence of a uniform fellow travelling constant $K$. 

    Therefore, a shortlex fellow-travelling word $w'$ for $w$ must be of the second type, with $|w'|=|w|$ and $w'\lex w$. In fact, $$w' = a^{x_1}ta^{y_1}ta^{x_2}ta^{y_2}.$$
    From \cref{lemma:exponent_game}, $x_1+x_2=x$ and $y_1+y_2=y$. Moreover,  $x_1, x_2\leq 0$ and $y_1,y_2\geq 0$ because $|w|=|w'|$. It is then clear that if $x_1\neq 0$, $w\lex w'$ because $t < a^{-1}$, and therefore we can assume that $w' = ta^{y_1}ta^{x_2}ta^{y_2}.$ Now, if $y_2\neq 0$ we must have $y_1<y$ and therefore $w\lex w'$ because of the prefixes $ta^{y_1+1}\lex ta^{y_1}t$ and $a < t$. We conclude that $y_2=0$, so $w'=w$ and the only word $w'\sll w$ is  the geodesic $a^xta^y$ discussed above. It follows that $G$ does not satisfy the shortlex FFTP for this ordering of the generators.
\end{proof}
\begin{remark}\label{remark:another_order}
    Note that a symmetric counterexample could have been given for the ordering $a^{-1}<t<a$.
\end{remark}
 We next show that the language $\slg$ is regular for any choice of ordering of the generators.
\begin{theorem}\label{prop:slg_regular_not_slfftp}
    Let $$G= \langle a, t \;|\; t^2=1, atat =tata\rangle.$$
    For any fixed total order on the set $\{a, a^{-1}, t\}$, the language of shortlex representatives $\slg$ is regular and, therefore, $G$ has finitely many $\slrep$-cones with respect to this generating set.
\end{theorem}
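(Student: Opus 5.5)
Since $\slg$ is contained in the language of geodesics, I will describe $\slg$ explicitly for each ordering as a finite union of languages defined by simple sign and length conditions on exponents. From such a description regularity will follow, and finiteness of $\slrep$-cones will then follow from \cref{lemma: construction automaton}. By \cite[Theorem 3.5]{elder05}, every shortlex representative lies in
$$\{a^x,\; a^xta^y,\; a^{x_1}ta^{y'}ta^{x_2}:\;x_1x_2\geq 0,\ y'\neq 0\}.$$
Words of the first two types are unique geodesics, so they all lie in $\slg$. The work is therefore to decide, among the words $a^{x_1}ta^{y'}ta^{x_2}$, which are shortlex minimal.

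First I will identify the geodesics representing the same element as a fixed $a^{x_1}ta^{y'}ta^{x_2}$. By the coordinate description, this word represents $(x_1+x_2, y', -)$. Any geodesic for this element has an even number of $t$'s and length $|x_1|+|x_2|+|y'|+2$, so it must be of the third type. By \cref{lemma:exponent_game}, together with the geodesic conditions $y'\neq 0$ and $x_1x_2\ge 0$, these geodesics are exactly the words $a^{x_1'}ta^{y'}ta^{x_2'}$ with $x_1'+x_2'=x_1+x_2=:X$ and $x_1',x_2'$ of the same sign as $X$ (or zero). Thus the competing representatives differ only in how $X$ is split.

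Next I will compare these words lexicographically. Suppose $X>0$. The words are $a^{x_1'}ta^{y'}ta^{X-x_1'}$ with $0\le x_1'\le X$. Comparing two of them with $x_1'<x_1''$, the first difference occurs at position $x_1'+1$, where the first word has $t$ and the second has $a$. Hence the minimum is $x_1'=X$ (that is, $a^Xta^{y'}t$) if $a<t$, and $x_1'=0$ (that is, $ta^{y'}ta^X$) if $t<a$. The case $X<0$ is symmetric, comparing $a^{-1}$ with $t$. The case $X=0$ gives the unique word $ta^{y'}t$. Consequently, for each of the six orders, $\slg$ is the union of
$$\{a^x\},\quad \{a^xta^y\},\quad \{ta^{y'}t: y'\neq 0\},$$
$$\{a^{x}ta^{y'}t \text{ or } ta^{y'}ta^{x}: x>0,\ y'\ne 0\},$$
$$\{a^{x}ta^{y'}t \text{ or } ta^{y'}ta^{x}: x<0,\ y'\ne 0\},$$
where the choice in the last two sets is dictated by whether $t$ lies above or below $a$, respectively $a^{-1}$. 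One also has to exclude non-freely-reduced words, since exponents are meant as $a^{n}$ or $(a^{-1})^{n}$. Each set is given by a regular expression such as $a^+t(a^+\cup(a^{-1})^+)t$, so $\slg$ is regular.

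The main obstacle is the middle step, where I must make sure that no other geodesics compete. For instance, a geodesic could in principle have four $t$'s when $y'$ is small, and a word $a^{x_1}ta^{y'}ta^{x_2}$ with opposite signs could still be geodesic. Both possibilities are ruled out by the form of the geodesic language quoted above and by the length count $|x|+|y|+\#t$ coming from the coordinates. I will lean on \cite{elder05} for this and check the small cases $y'=\pm1$ by hand.
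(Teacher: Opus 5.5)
Your proposal is correct and follows essentially the same route as the paper: Elder's description of the geodesics, uniqueness of the geodesics $a^x$ and $a^xta^y$, and \cref{lemma:exponent_game} reduce everything to choosing the shortlex-minimal split $x_1'+x_2'=X$, which gives the same case-by-case description of $\slg$ as a finite union of regular languages. Your explicit lexicographic comparison (first difference at position $|x_1'|+1$, with $t$ against $a^{\pm1}$) spells out the step the paper calls immediate, and your closing check of small cases is unnecessary given the cited classification of geodesics.
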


\begin{proof}
We will see that any total order on $\{a, a^{-1}, t\}$ yields a regular language $\slg$.

Following the description of geodesics given at the beginning of the section, and recalling that the words of types $a^x$ and $a^xta^y$ are unique geodesics, we know that the shortlex representative of a geodesic of the form $a^{x_1}ta^yta^{x_2}$ with $x_1,x_2\in \Z$, $y\in\Z-\{0\}$ and $x_1x_2\geq 0$ is another word in the same family. 

\cref{lemma:exponent_game} makes it immediate to decide what the shortlex representative of a word $a^{x_1}ta^yta^{x_2}$ is 
depending on the order that we fix on the set $\{a,a^{-1},t\}$. Note that we only consider $y\in \Z\setminus\{0\}$ because the case $y = 0$ yields the word $a^{x_1+x_2}$ as its unique geodesic and therefore as its shortlex representative. Now:
\begin{itemize}
    \item If $a<a^{-1}<t$ or $a^{-1}<a<t$, for all $y\in \mathbb{Z}\backslash\{0\}$ we have $$\slrep(a^{x_1}ta^yta^{x_2}) = a^{x_1+x_2}ta^{y}t.$$ 
    \item If $t<a<a^{-1}$ or $t<a^{-1}<a$,  for all $y\in \mathbb{Z}\backslash\{0\}$ we have $$\slrep(a^{x_1}ta^yta^{x_2}) = ta^{y}ta^{x_1+x_2}.$$
    \item If $a<t<a^{-1}$, for all $x_1+x_2\geq 0$, $y\neq 0$ we have $$\slrep(a^{x_1}ta^yta^{x_2}) = a^{x_1+x_2}ta^{y}t$$   and  for all for all $x_1+x_2\leq 0$, $y\neq 0$ we have $$\slrep(a^{x_1}ta^yta^{x_2}) = ta^{y}ta^{x_1+x_2}.$$     
    \item If $a^{-1}<t<a$,  for all $x_1+x_2\leq 0$, $y\neq 0$  we have that $$\slrep(a^{x_1}ta^yta^{x_2})  = a^{x_1+x_2}ta^{y}t$$ and  for all $x_1+x_2\geq 0$, $y\neq 0$, we have that $$\slrep(a^{x_1}ta^yta^{x_2})  = ta^{y}ta^{x_1+x_2}.$$ 
\end{itemize}
For each of the possible total orders on $\{a,a^{-1},t\}$, it is clear from the description above that the language of shortlex representatives of the words $$\{a^{x_1}ta^yta^{x_2}:\; x_1x_2\geq 0, y\neq 0\}$$
is a regular language. As we previously observed, the shortlex representatives of the remaining elements in the group are given by words in $$\{a^x,\; a^xta^y:\;x, y \in \Z\},$$
which is the union of two regular languages. Therefore, given any total order on the generating set $\{a,a^{-1},t\}$, the language $\slg$ of shortlex representatives of the group $G$ is given by the union of three regular languages which is again a regular language.
\end{proof}
\cref{fig:autAat,fig:auttAa,fig:autAta,fig:autatA} show the finite state automaton given by the construction in \cref{lemma: construction automaton} in every case. In each automaton, all states are accepting and the initial state is represented by the fully colored circle in black. The circles depicting the states are decorated so that they are reminiscent of the shortlex cone they represent. Following the terminology introduced in \cite{elder05}, black states correspond to SL-cones of elements in the bottom layer of the a Cayley graph, while the red ones correspond to elements in the top layer. The filling pattern of each state corresponds to the horizontal and vertical coordinates of the elements whose SL-cone they represent. 
\begin{figure}[h]
    \centering
    \begin{subfigure}[t]{0.5\textwidth}
        \centering
		\includegraphics[width=\linewidth]{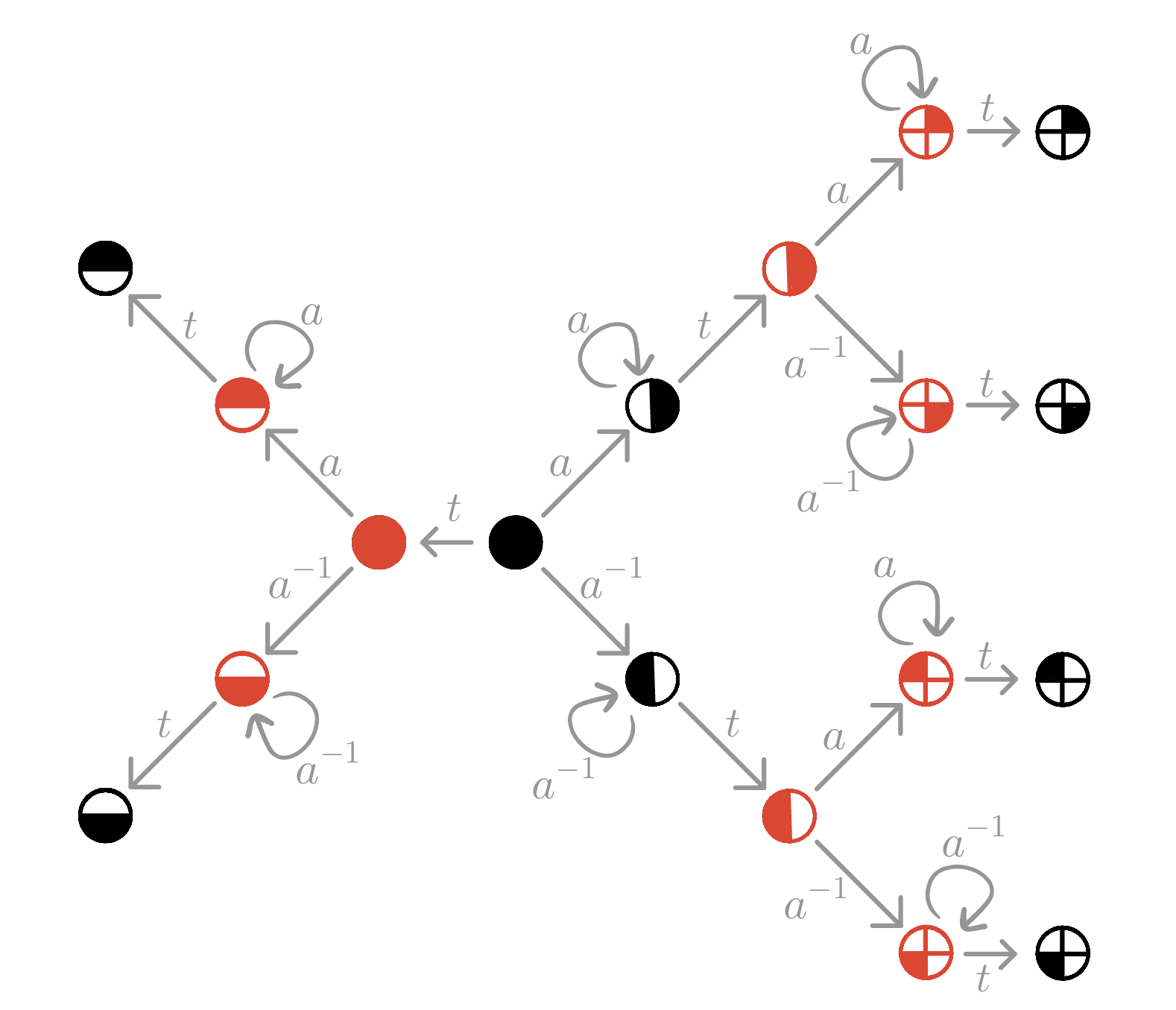}
		\caption{ Automaton for $\slg$ when $a<a^{-1}<t$.}
		\label{fig:autAat}
    \end{subfigure}%
    ~ 
    \begin{subfigure}[t]{0.5\textwidth}
        \centering
		\includegraphics[width=\linewidth]{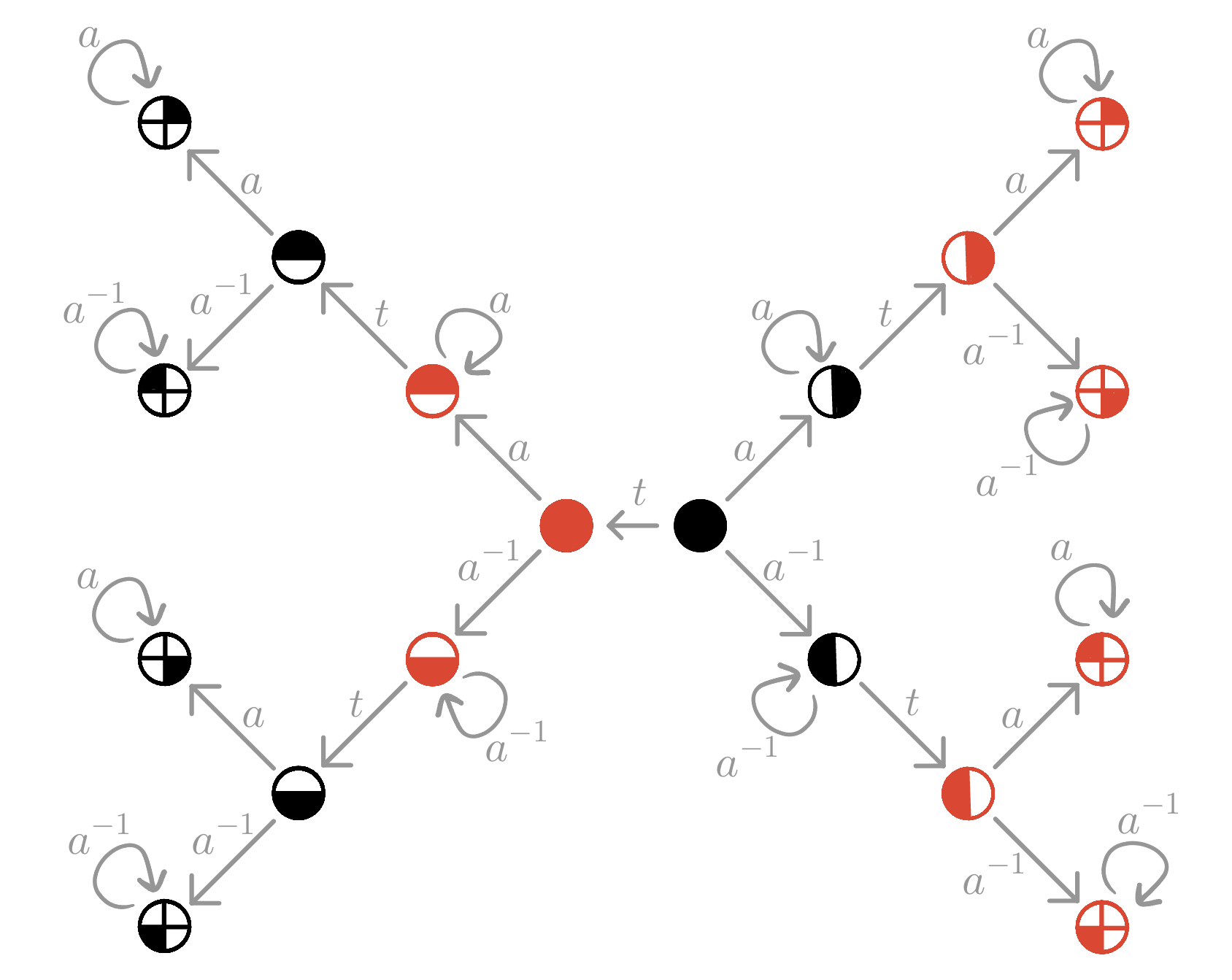}
		\caption{Automaton for $\slg$ when $t<a<a^{-1}$.}
		\label{fig:auttAa}
    \end{subfigure}

     \begin{subfigure}[t]{0.5\textwidth}
        \centering
		\includegraphics[width=\linewidth]{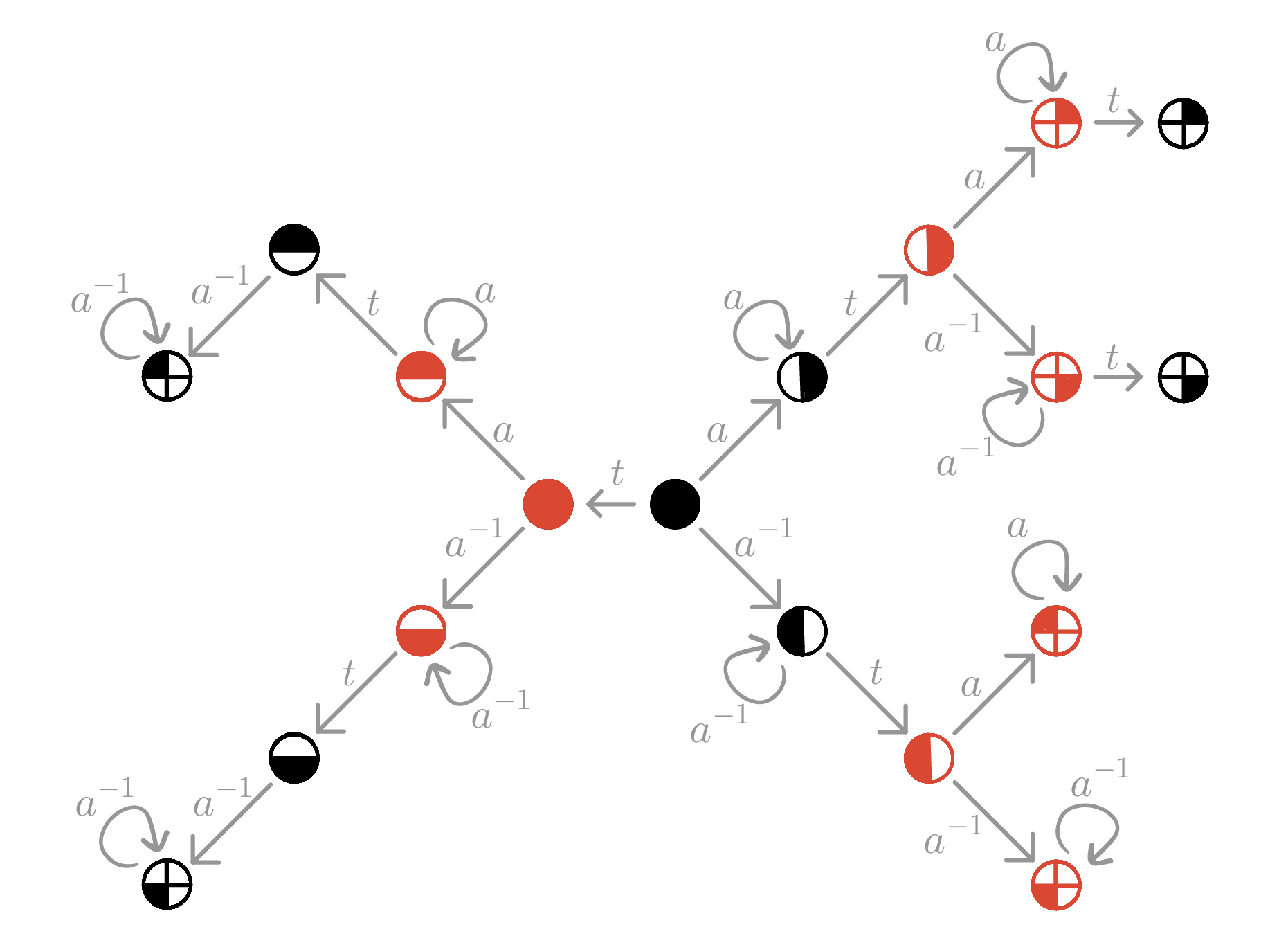}
		\caption{Automaton for $\slg$ when $a<t<a^{-1}$.}
		\label{fig:autAta}
    \end{subfigure}%
    ~ 
    \begin{subfigure}[t]{0.5\textwidth}
        \centering
		\includegraphics[width=\linewidth]{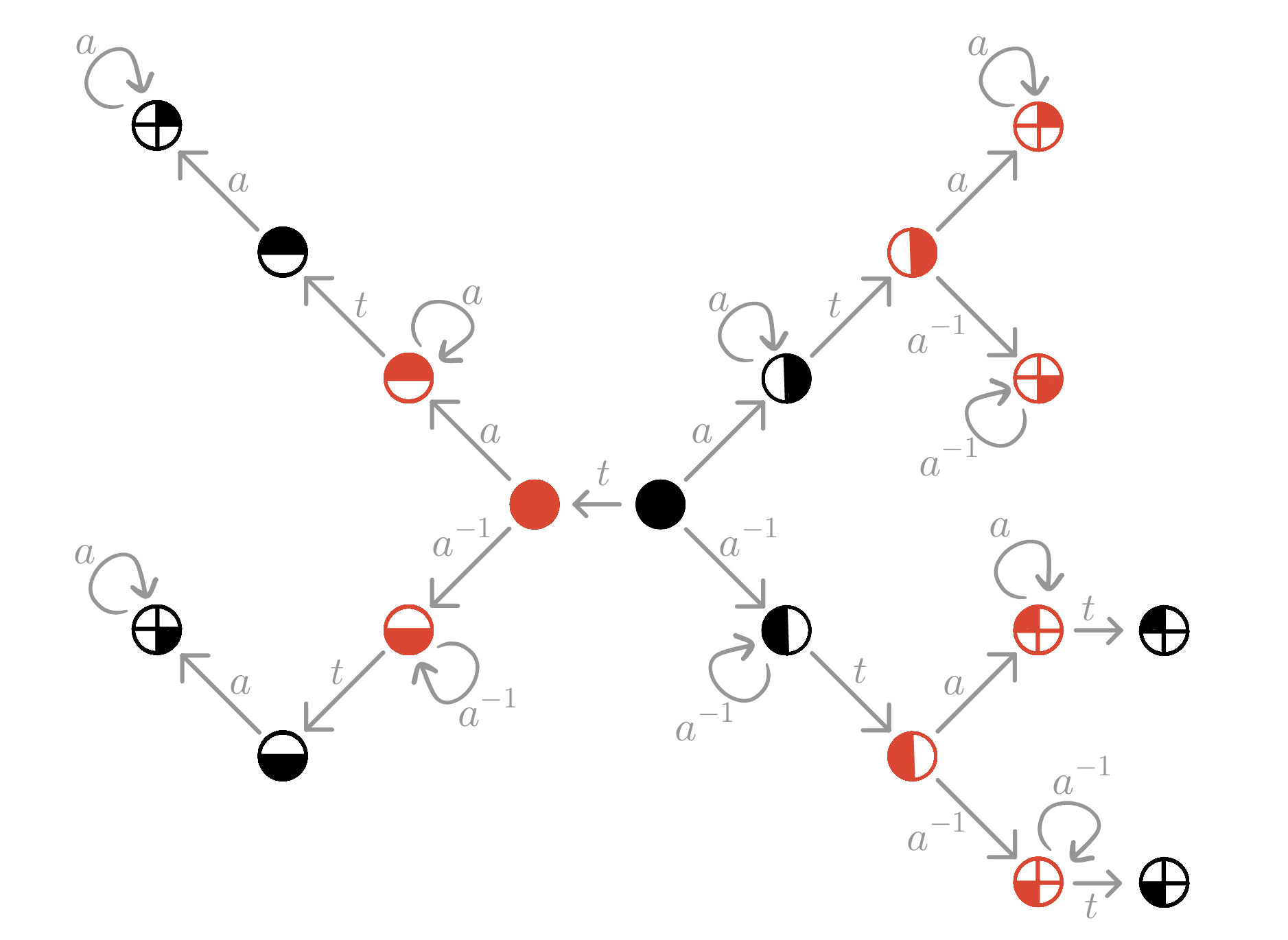}
		\caption{Automaton for $\slg$ when $a^{-1}<t<a$.}
		\label{fig:autatA}
    \end{subfigure}
\end{figure}

To conclude the comparison between FFTP and shortlex FFTP, we prove that for the orderings that were not considered in \cref{prop:ordering_non_slfftp,remark:another_order}, the group $G$ exhibits the shortlex FFTP. 
\begin{proposition}\label{prop:some_orderings_slfftp}
    For  any of the orderings $a<a^{-1}<t$, $a^{-1}<a<t$, $t<a<a^{-1}$, $t<a^{-1}<a$, the group $G$ has the shortlex FFTP.
\end{proposition}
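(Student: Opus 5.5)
Fix one of the four orderings; by the symmetry $a\leftrightarrow a^{-1}$ (the automorphism $a\mapsto a^{-1}$, $t\mapsto t$ preserves the relations) it suffices to treat $a<a^{-1}<t$ and $t<a<a^{-1}$. The plan is to find a constant $K$ (something like $K=3$ or $4$) and, for every word $w\notin\slg$, to produce $v\sll w$ with $v=_G w$ that asynchronously $K$-fellow travels with $w$. Since the definition only asks for \emph{some} smaller word, we may localise: if $w=w_1w_2w_3$ and the middle factor $w_2$ admits a smaller equal word $v_2$ fellow travelling it with constant $K$, then $w_1v_2w_3$ works for $w$. This holds because $|v_2|\le |w_2|$, and if the lengths are equal then $v_2\lex w_2$, so $w_1v_2w_3\sll w$. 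So the task reduces to showing that every $w\notin\slg$ contains a short ``bad'' pattern, or else a long factor that we can fix directly with bounded fellow-travelling distance.

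First I handle the non-geodesic words. Every freely reducible word ($aa^{-1}$, $a^{-1}a$, $tt$) is handled by deletion, with distance $1$. Using Elder's description of geodesics, a freely reduced non-geodesic word contains a factor of the form $ta^{y}ta^{x}t$, or a factor $a^{x_1}ta^yta^{x_2}$ with $x_1x_2<0$, or one of $a^{x}ta^{0}t$ type (which is not freely reduced). For $a^{x_1}ta^yta^{x_2}$ with $x_1x_2<0$, I will cancel one $a$ and one $a^{-1}$ by moving them through the commuting block $ta^yt$, i.e. replace it by $a^{x_1\pm1}ta^yta^{x_2\mp1}$, using \cref{lemma:exponent_game}. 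The fellow travelling must be checked here. The block $ta^yt$ commutes with $a$, so the two paths differ by the single translate $a^{\pm1}$ along the middle part, which keeps them at distance $\le 2$. For $ta^yta^xt$ the word equals $a^xta^y$, and the paths are far apart; this is exactly the obstruction in \cref{prop:ordering_non_slfftp}. With these orderings, however, I aim to show a smaller word of the \emph{same} length that stays close. If $t$ is largest, I use $a^{\mathrm{sgn}(x)}ta^yta^{x-\mathrm{sgn}(x)}t$, which moves one $a$-letter to the front. If $t$ is smallest, I move letters step by step inside the factor $ta^yta^x t$ toward a representative where the exponents sit later. I expect to iterate one-letter moves: each move is shortlex-decreasing and stays within bounded distance, and it is only needed once, since a single smaller word suffices.

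Next I handle geodesic words not in $\slg$. By the theorem's case list, these are $a^{x_1}ta^yta^{x_2}$ ($x_1x_2\ge0$, $y\ne0$) not equal to $a^{x_1+x_2}ta^yt$ (when $t$ is last) or $ta^yta^{x_1+x_2}$ (when $t$ is first). In the first case, if $x_2\ne0$, I replace the word by $a^{x_1+\mathrm{sgn}(x_2)}ta^yta^{x_2-\mathrm{sgn}(x_2)}$. This is equal by \cref{lemma:exponent_game}, has the same length, and is lex smaller because at the first difference an $a^{\pm1}$ replaces $t$ and $t$ is largest. In the second case, if $x_1\ne0$, I shift one letter from $x_1$ to $x_2$; the new word begins with $t$ earlier or has a smaller letter at the first difference. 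In each case the paths differ by a single commuting $a^{\pm1}$, so they $2$-fellow travel.

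The main obstacle will be the non-geodesic factor $ta^yta^xt$ in the case where the lengths force a same-length replacement. There I must verify carefully, in the coordinates $(x,y,\pm)$, that a one-step rearrangement exists which is both lex-smaller for the given ordering and uniformly close. That is precisely where the orderings $a<t<a^{-1}$ fail, so the check must genuinely use that $a$ and $a^{-1}$ lie on the same side of $t$.
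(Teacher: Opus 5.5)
Your plan follows the paper's proof. The automorphism $a\mapsto a^{-1}$, $t\mapsto t$ halves the orderings. Geodesic words $a^{x_1}ta^yta^{x_2}$ are fixed by sliding one letter $a^{\pm1}$ across the block $ta^yt$, and non-geodesic words are fixed locally. For $t$ largest, your replacement $a^{\operatorname{sgn}(x)}ta^yta^{x-\operatorname{sgn}(x)}t$ of a factor $ta^yta^xt$ is exactly the paper's item (4). Your extra cases are genuinely needed, even though the paper's case split passes over them: free reductions, and freely reduced words $a^{x_1}ta^yta^{x_2}$ with $x_1x_2<0$ (e.g.\ $atata^{-1}$), which contain no factor $ta^yta^xt$.

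The gap is the step you yourself single out. For the orderings with $t$ smallest, you never produce the replacement for $v=ta^yta^xt$. ``Moving letters step by step'' is only an expectation, and this is precisely where the position of $t$ in the ordering matters. The paper just calls this case symmetric to $a<a^{-1}<t$. Your automorphism fixes $t$, so it cannot exchange these orderings, and an explicit move is required.

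One move suffices. With $\varepsilon=\operatorname{sgn}(y)$, put $v'=ta^{y-\varepsilon}ta^{x}ta^{\varepsilon}$.
\begin{itemize}
\item By \cref{lemma:exponent_game}, comparing the exponent tuples $(0,y-\varepsilon,x,\varepsilon)$ and $(0,y,x,0)$, we get $v'=_Gv$, and clearly $|v'|=|v|$.
\item The two words share the prefix $ta^{y-\varepsilon}$, after which $v'$ reads $t$ and $v$ reads $a^{\varepsilon}$. Hence $v'\lex v$, because $t$ is smallest.
\item In coordinates relative to the start of the factor, $v$ visits $(0,k,+)$ and then $(j,y,-)$. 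Meanwhile $v'$ visits $(0,k,+)$, then $(j,y-\varepsilon,-)$, then $(x,y-\varepsilon,+)$. So the two paths $3$-fellow travel.
\end{itemize}
Do not iterate such moves: composing several of them would lose the uniform constant.

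Finally, your distance bound $2$ in the sliding arguments is wrong. The vertices $a^{x_1}ta^k$ and $a^{x_1\pm1}ta^k$ differ by right multiplication by $ta^{\pm1}t$. They are at distance $3$, since the first coordinate only changes in the bottom layer. This is harmless: $K=3$ works in every case.
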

\begin{proof}
    The case $a<a^{-1}<t$ is identical to the case $a^{-1}<a<t$, and the case $t<a<a^{-1}$ is identical to $t<a^{-1}<a$. The case $t<a<a^{-1}$ is symmetrical to the case $a<a^{-1}<t$, so we reduce the problem to the ordering $a<a^{-1}<t$.

    Let $w$ be a word over the generators so that $w\not \in \slg$, let us see that there is a global constant $K$ so that we can find a word $w'\sll w$ representing the same group element and $K$-fellow travelling with $w$. 
    
    We distinguish two cases: $w$ is a geodesic and therefore has the form $a^x$, $a^xta^y$ or $a^{x_1}ta^yta^{x_2}$ with $x_1x_2\geq 0$, or else $w$ is not geodesic and contains a subword $ta^yta^xt$. The first two families are unique geodesics as observed at the beginning of the section and therefore are already in $\slg$, so we only need to consider the remaining two families.
    
    For the words $a^{x_1}ta^yta^{x_2}$ it is convenient to separate the case where $x_1=0$ and $ x_2\neq 0$, the case $x_1\neq 0$ and $x_2=0$, and the case $x_1x_2>0$. In all three cases we can assume $y\neq 0$. For the subwords $ta^yta^xt$, we can assume that $x\neq 0$ and $y\neq 0$. We have: 
    \begin{enumerate}
        \item If $w=ta^yta^{x_2}$, the word $w'=a^{\pm 1}ta^yta^{x_2\mp1}$ (the choice of $\pm$ depends on the sign of $x_2$) fellow travels with $w$ at distance at most $3$, and satisfies $w'\sll w$.
        \item If $w=a^{x_1}ta^yt$, $w$ is already a shortlex representative.
        \item If $w=a^{x_1}ta^yta^{x_2}$ with $x_1x_2>0$, the word $w'=a^{x_1\pm 1}ta^yta^{x_2\mp1}$ (the choice of $\pm$ depends on the sign of $x_1$) fellow travels with $w$ at distance at most $3$, and satisfies $w'\sll w$.
        \item If $w$ contains a subword $v = ta^yta^xt$, $v$ can be replaced by the word $v'=a^{\pm 1}ta^yta^{x\mp 1}t$ (the choice of $\pm$ depends on the sign of $x$) which fellow travels with $v$ at distance at most $3$ and satisfies $v'\sll v$.
    \end{enumerate}
    
\end{proof}

\noindent {\bf Acknowledgements}. The authors would like to thank Yago Antolín, advisor of the second author, for all the help during the process of carrying out this project and for his careful reading of the drafts of this work. Paloma López Larios acknowledges support by the grant PRE2022-101796, financed by MCIN/AEI/10.13039/ 501100011033 and by FSE+, and also by the Severo Ochoa Grant CEX2023-001347-S funded by MICIU/AEI/10.13039/501100011033. Lucía Asencio Martín acknowledges support by Leverhulme foundation grant no.RPG-2022-025 during the work on this article. Both authors acknowledge support by the projects PID2021-126254NB-I00 and PID2024 155800NB-C32.

\bibliographystyle{abbrv}
\bibliography{referencias}

\end{document}